\documentclass[12pt]{article}
\pagestyle{plain} \baselineskip 0.22in \textwidth 16.80cm
\textheight 22.0cm \topmargin -0.50cm \oddsidemargin -0.3cm
\evensidemargin -0.3cm
\parskip 0.2cm
\baselineskip 0.6cm \makeatletter

\usepackage{graphicx}
\usepackage{color,latexsym,amsfonts,amssymb}
\usepackage{bm}     
\usepackage{amsthm} 
\usepackage{amsmath}
\usepackage{booktabs} 
\usepackage{boxedminipage}  
\usepackage{natbib} 
\usepackage{algorithm,algpseudocode}  
\usepackage{slashbox}   
\usepackage{subfigure} 

\DeclareMathOperator*{\argmax}{argmax}
\DeclareMathOperator*{\argmin}{argmin}

\newtheorem{theorem}{Theorem}

\newtheorem{lemma}{Lemma}
\newtheorem{definition}{Definition}

\newcommand{\CVaR}{\mathrm{CVaR}}
\newcommand{\VaR}{\mathrm{VaR}}

\title{\vspace{-1.5cm} Risk-Sensitive Markov Decision Processes with Long-Run CVaR Criterion}
\author{Li Xia\thanks{L. Xia is with the School of Business, Sun Yat-Sen University, Guangzhou 510275, China. (email: xiali5@sysu.edu.cn)}, \quad Peter W. Glynn\thanks{P. W. Glynn is with the Department of Management Science and Engineering, Stanford University, CA 94305, USA. (email: glynn@stanford.edu) }}
\date{}

\begin{document}
\maketitle

\vspace{-0.2cm}
\begin{abstract}
CVaR (Conditional Value at Risk) is a risk metric widely used in
finance. However, dynamically optimizing CVaR is difficult since it
is not a standard Markov decision process (MDP) and the principle of
dynamic programming fails. In this paper, we study the
infinite-horizon discrete-time MDP with a long-run CVaR criterion,
from the view of sensitivity-based optimization. By introducing a
pseudo CVaR metric, we derive a CVaR difference formula which
quantifies the difference of long-run CVaR under any two policies.
The optimality of deterministic policies is derived. We obtain a
so-called Bellman local optimality equation for CVaR, which is a
necessary and sufficient condition for local optimal policies and
only necessary for global optimal policies. A CVaR derivative
formula is also derived for providing more sensitivity information.
Then we develop a policy iteration type algorithm to efficiently
optimize CVaR, which is shown to converge to local optima in the
mixed policy space. We further discuss some extensions including the
mean-CVaR optimization and the maximization of CVaR. Finally, we
conduct numerical experiments relating to portfolio management to
demonstrate the main results. Our work may shed light on dynamically
optimizing CVaR from a sensitivity viewpoint.
\end{abstract}

\textbf{Keywords}: Markov decision process, risk-sensitive, long-run
CVaR, sensitivity-based optimization, Bellman local optimality
equation

\section{Introduction}\label{section_intro}
The Markov decision process (MDP) is a fundamental mathematical
model used to handle stochastic dynamic optimization problems
\citep{Feinberg02,Puterman94}. The study of MDPs in operations
research is multidisciplinary. It is deeply connected with
reinforcement learning in computer science
\citep{Kaelbling96,Sutton18}, optimal control in control science
\citep{Bertsekas05,Lewis12}, and dynamic discrete choice modeling in
econometrics \citep{Aguirregabiria10,Rust87}, etc. Traditional MDP
theory focuses on the criteria of discounted or long-run average
cost, where the principle of dynamic programming plays a key role.
However, for other optimization criteria, such as risk metrics in
finance, the corresponding optimization problems usually do not fit
the standard MDP model and specific investigations are needed case
by case.

Conditional value at risk (CVaR), also called average VaR or
expected shortfall, is a widely used risk metric, built upon the VaR
and variance related risk metrics \citep{Rockafellar00}. We view a
continuous random variable $X$ as the stochastic loss and denote its
distribution function as $F_X(x)$. The CVaR corresponding to this
stochastic loss at probability level $\alpha$ is defined as
$\CVaR(X) := \mathbb E[X|X \geq F_X^{-1}(\alpha)]=
\frac{1}{1-\alpha}\int_{\alpha}^{1} F_X^{-1}(\gamma)d\gamma$, which
measures the conditional expectation of losses greater than a given
quantile $F_X^{-1}(\alpha)$ (or called $\VaR(X)$), where
$0<\alpha<1$. Compared with other risk metrics such as variance or
VaR, CVaR can measure not only the down-side risk but also the
expected value of large losses. Moreover, CVaR is a risk coherent
measure \citep{Artzner99}, which has desirable properties
(monotonicity, translation equivariance,
sub-additivity, and positive homogeneity). 

CVaR has been widely used to measure risk in static (single-stage)
optimization problems \citep{Alexander06,Fu09,Hong14}. However, it
is difficult to handle the CVaR optimization problem in stochastic
dynamic (multi-stage) scenarios because of time inconsistency
\citep{Boda06,Pflug16}. In the MDP terminology, we may define an
instantaneous cost function for the long-run CVaR metric as $f(i,a)
:= F_X^{-1}(\alpha)+\frac{1}{1-\alpha}[c(i,a) -
F_X^{-1}(\alpha)]^+$, where $c(i,a)$ is the MDP's one-step loss
incurred at state $i$ with action $a$, $[\cdot]^+ :=
\max\{0,\cdot\}$, and $X$ is the random variable indicating the
stochastic loss whose value is realized as $c(i,a)$'s. We can
observe that $f(i,a)$ involves $F_X^{-1}(\alpha)$ which is affected
by future losses and actions. Therefore, the CVaR cost function
depends on future behaviors and is not additive or Markovian. The
CVaR dynamic optimization problem does not fit a standard MDP model.
Thus, the classical Bellman optimality equation does not hold and
the principle of dynamic programming fails. Fortunately,
\cite{Rockafellar02} discovered that the $\CVaR$ of the random
variable $X$ is equivalent to $\min\limits_{y\in \mathbb
R}\{y+\frac{1}{1-\alpha}\mathbb E[X - y]^+\}$, where
$y^*=F_X^{-1}(\alpha)$ exactly achieves the minimum. With this
equivalence, \cite{Bauerle11} skillfully converted the CVaR
minimization of discounted accumulated costs with finite or infinite
horizon into a bilevel optimization problem, where the outer one is
a static optimization problem with the auxiliary variable $y \in
\mathbb R$ and the inner one is a standard MDP with the fixed $y$
and an augmented state space. Although the existence and properties
of optimal policies were studied there, the efficient solution of
such bilevel MDP problems was not presented. For different values of
$y \in \mathbb R$, we have to solve different inner MDP problems,
which is computationally exhaustive. Such techniques of equivalent
MDP transformation and state augmentation were further extended to
study the CVaR minimization in other general cases, such as
semi-MDPs \citep{Huang16}, continuous-time MDPs \citep{Miller17},
unbounded costs \citep{Ugurlu17}, just to name a few. On the other
hand, \cite{Haskell15} used occupation measures to study the CVaR
optimization of discounted cost infinite-horizon MDPs from the
viewpoint of mathematical programming, where the optimality of
deterministic policies was also discussed. However, it is usually
not efficient to solve a series of mathematical programs, especially
when they are not convex or linear programs. Efficiently computing
CVaR optimal policies is an important but not well studied topic in
the MDP theory.


In the literature, most works study the CVaR of discounted
accumulated costs at a terminal stage $T$, say $\CVaR(\tilde{X}_T)$,
where $\tilde{X}_T := \sum_{t=0}^T \beta^t X_t$, $X_t$ is the cost
at time $t$, and $\beta$ is the discount factor. However, people
also care about the cost fluctuation during the procedure, i.e.,
$\CVaR(X_t)$ at each time $t$. For example, in financial
engineering, a risk-averse investor cannot tolerate high risks
during the asset management process, even though the variation of
returns at the end of the contract is small. High fluctuations of
the asset value may bring anxiety to the risk-averse investor. On
the other hand, it is becoming popular to add more inspection points
(monthly or even daily) to measure portfolio performance, rather
than a single inspection point at the end of the contract.
Therefore, the risk measure of procedure behaviors is significant
for the theory of MDPs. Compared with the existing works on the
steady variance optimization of MDPs \citep{Chung94,Sobel94,Xia20},
the steady CVaR measure in MDPs is rarely studied.

In this paper, we study a discrete-time undiscounted MDP with a
long-run CVaR criterion. Different from the CVaR metrics for
discounted costs, the long-run CVaR measures the conditional
expectation of costs when the MDP reaches steady-state. Since the
traditional approach of dynamic programming is not applicable for
such non-standard MDP problems, we study this problem from the
viewpoint of sensitivity-based optimization. By introducing a
so-called pseudo CVaR, we derive a bilevel MDP formulation with
nested structures, where the inner one is a standard MDP for
minimizing the pseudo CVaR. Then, we derive a closed-form difference
formula to quantify the difference of long-run CVaR under any two
policies. The CVaR difference formula has an elegant form which
reduces the difficulty caused by the non-additive CVaR cost
function. The optimality of deterministic policies is also shown
from the nested formulation. With the CVaR difference formula, we
further derive an optimality equation called the Bellman
\emph{local} optimality equation, which describes a necessary and
sufficient condition of local optimal policies in the mixed policy
space, while is only necessary for global optimal policies. A CVaR
derivative formula is also obtained. With the CVaR sensitivity
formulas and the Bellman local optimality equation, we develop an
iterative algorithm which behaves similarly to the classical policy
iteration algorithm and can efficiently converge to local optima.
Some interesting extensions are also discussed, including the
mean-CVaR optimization problem and the maximization problem of CVaR.
Finally, we conduct numerical examples about portfolio management to
demonstrate the efficiency of our approach.

The main contributions of this paper have three-fold. First, we
study the discrete-time undiscounted MDP with the long-run CVaR
criterion. To the best of our knowledge, our paper is the first to
investigate MDP theory for minimizing long-run CVaR. Secondly, we
derive the CVaR difference and derivative formulas, and the Bellman
local optimality equation. Thirdly, we develop a policy
iteration-type algorithm, which appears to be much more efficient
than solving the bilevel MDP problem studied in the literature
(equivalent to solving a series of standard MDPs with the number of
MDPs equal to the number of possible values of the outer-tier
parameter $y \in \mathbb R$). Along with our research direction, we
may further develop value iteration-type or policy gradient
algorithms, which can enrich the algorithmic study on CVaR dynamic
optimization.

The rest of the paper is organized as follows. In
Section~\ref{section_model}, we give the MDP formulation with the
long-run CVaR criterion. In Section~\ref{section_result}, we present
the main results of this paper, including the CVaR difference
formula, the Bellman local optimality equation, iterative
algorithms, and the related theorems. In
Section~\ref{section_extension}, we discuss some possible extensions
of our results. In Section~\ref{section_experiment}, numerical
experiments are conducted to demonstrate our main results. Finally,
we conclude this paper and discuss future research topics in
Section~\ref{section_conclusion}.

\section{Problem Formulation}\label{section_model}
Consider a discrete-time finite MDP with tuple $\langle \mathcal S,
\mathcal A, \mathcal P, c \rangle$, where the state space is
$\mathcal S := \{1,2,\dots, S\}$  and the action space is $\mathcal
A :=
\{a_1,a_2,\dots, a_A\}$. 
A function $\mathcal P : \mathcal S \times \mathcal A
\overset{D}{\mapsto} \mathcal S$ is the state transition probability
kernel with element $p(j|i,a)$, where $\overset{D}{\mapsto}$
represents a mapping to the distribution on the successor $\mathcal
S$. The element $p(j|i,a)$ indicates the transition probability to
state $j$ when action $a$ is adopted at the current state $i$.
Obviously, we have $\sum_{j \in \mathcal S} p(j|i,a) = 1$ for any
$i\in \mathcal S, a \in \mathcal A$. We denote $c : \mathcal S
\times \mathcal A \mapsto \mathbb R$ as the cost function and its
element $c(i,a)$ is the cost incurred at state $i$ with action $a$.
In this paper, we limit our discussion to stationary policies which
make the associated Markov chains always ergodic (the main results
in this paper may be extended to unichain MDPs). A function $d :
\mathcal S \times \mathcal A \mapsto [0,1]$ describes a stationary
randomized policy and its element $d(i,a)$ indicates the probability
of adopting action $a$ at state $i$, where $\sum_{a \in \mathcal A}
d(i,a) = 1$ for any $i \in \mathcal S$. If $d$ is a deterministic
policy, we also use $d(i) \in \mathcal A$ to indicate the action
adopted at state $i$, with a slight abuse of notation. We denote
$\mathcal D$ and $\mathcal D_0$ as the space of stationary
randomized and deterministic policies, respectively.

We investigate the steady-state behavior of the MDP with policy $d$.
We denote $\pi^d : \mathcal S \times \mathcal A \mapsto [0,1]$ as
the steady distribution on the space of state-action pairs. We also
denote $\pi^d(i) = \sum_{a \in \mathcal A} \pi^d(i,a)$, $i \in
\mathcal S$. The transition probability matrix of the MDP with
policy $d$ is denoted as $P^d$ whose element is $P^d(i,j) = \sum_{a
\in \mathcal A} p(j|i,a)d(i,a)$, $i,j \in \mathcal S$. By denoting
$C^d_t$ as the instantaneous cost of the MDP at time $t$, we can
view $C^d_t$ as a random variable whose distribution is determined
by the initial state distribution $\nu$ and the $t$-step transition
probability matrix $(P^d)^t$, i.e., $\nu (P^d)^t$. Obviously, we
have $\lim\limits_{t\rightarrow \infty}\nu (P^d)^t = \pi^d$ for any
$\nu$. Since the MDP is finite, $C^d_t$ is a \emph{discrete} random
variable with distribution function $F_{C^d_t}(w)$ and its $\CVaR$
at a probability level $\alpha \in (0,1)$ is defined as below.
\begin{equation}
\CVaR(C^d_t) := \mathbb E \{ C^d_t | C^d_t \geq F_{C^d_t
}^{-1}(\alpha) \}, \nonumber
\end{equation}
where $F_{C^d_t}^{-1}(\alpha)$ is the inverse distribution function
of random variable $C^d_t$, which is also known as its value at risk
(VaR) at $\alpha$, or called $\alpha$-quantile
\begin{equation}
F_{C^d_t}^{-1}(\alpha) = \VaR(C^d_t) := \inf_{w \in \mathbb R}\{w :
F_{C^d_t}(w) \geq \alpha \}. \nonumber
\end{equation}
The long-run CVaR of this MDP under policy $d$ is further defined as
\begin{equation}\label{eq_CVaRdef0}
\CVaR^d := \lim\limits_{T \rightarrow \infty} \frac{1}{T}
\sum_{t=0}^{T-1} \CVaR(C^d_t).
\end{equation}
Since the MDP is assumed always ergodic under any policy $d \in
\mathcal D$, we have
\begin{equation}
\CVaR^d = \lim\limits_{t \rightarrow \infty} \CVaR(C^d_t). \nonumber
\end{equation}
Similarly, we also have
\begin{equation}\label{eq_VaR1}
\VaR^d := \lim\limits_{t \rightarrow \infty} \VaR(C^d_t). 
\end{equation}
Our optimization objective is to find the optimal policy $d^*$ which
attains the minimum of $\CVaR^d$, i.e.,
\begin{equation}\label{eq_CVaRoptm}
\begin{array}{rcl}
d^* &=& \argmin\limits_{d \in \mathcal D}\{ \CVaR^d \},\\
\CVaR^* &=& \min\limits_{d \in \mathcal D}\{
\CVaR^d \} = \CVaR^{d^*}.\\
\end{array}
\end{equation}

This MDP optimization problem with the long-run CVaR criterion is
difficult to handle. To accommodate the CVaR metric, we define a new
cost function as
\begin{equation}\label{eq_CVaRf}
\tilde{c}(y,i,a) := y + \frac{1}{1-\alpha}[c(i,a) - y]^+, \quad
\forall i \in \mathcal S, a \in \mathcal A, y \in \mathbb R.
\end{equation}
By denoting $\tilde{c}(y) := (\tilde{c}(y,i,a): i \in \mathcal S, a
\in \mathcal A)$, we can obtain that the long-run cost of this new
MDP $\langle \mathcal S, \mathcal A, \mathcal P, \tilde{c}(y)
\rangle$ is
\begin{equation}\label{eq_pif}
\widetilde{\CVaR}^d(y) = \lim\limits_{T\rightarrow \infty}
\frac{1}{T}\sum_{t=0}^{T-1} \tilde{c}(y,X_t,A_t) = \sum_{i\in
\mathcal S, a\in \mathcal A} \pi^d(i,a)\tilde{c}(y,i,a),
\end{equation}
where $X_t$ and $A_t$ is the system state and action at time $t$,
respectively. We call $y$ and $\widetilde{\CVaR}^d(y)$ the
\emph{pseudo VaR} and the \emph{pseudo CVaR} of the MDP,
respectively. When $y$ equals the real VaR, then the pseudo CVaR
equals the real CVaR of the MDP, i.e.,
\begin{equation}\label{eq_CVaR3}
\CVaR^d = \widetilde{\CVaR}^d(y)\Big|_{y=\VaR^d}.
\end{equation}

From (\ref{eq_pif})\&(\ref{eq_CVaR3}), we can see that the long-run
CVaR optimization of MDP $\langle \mathcal S, \mathcal A, \mathcal
P, c \rangle$ is equivalent to the long-run average optimization of
MDP $\langle \mathcal S, \mathcal A, \mathcal P, \tilde{c}(\VaR^d)
\rangle$. However, the element of $\tilde{c}(\VaR^d)$, i.e.,
$\tilde{c}(\VaR^d,i,a)$, depends on not only the state-action pair
$(i,a)$, but also the whole policy behavior. That is,
$\tilde{c}(\VaR^d)$ is not an additive cost function and the tuple
$\langle \mathcal S, \mathcal A, \mathcal P, \tilde{c}(\VaR^d)
\rangle$ is not a standard MDP model. Classical MDP theory does not
apply to this long-run CVaR optimization problem.

\section{Main Results}\label{section_result}
In this section, we use the sensitivity-based optimization (SBO)
theory \citep{Cao07} to study this long-run CVaR optimization
problem.  First, we restate an important property of CVaR, which was
discovered by \cite{Rockafellar02}.
\begin{lemma}\label{lemma1}
The CVaR of random variable $X$ with probability level $\alpha$ is
equal to the minimum of the following convex optimization problem
\begin{equation}
\CVaR(X) = \min_{y \in \mathbb R}\left\{ y +
\frac{1}{1-\alpha}\mathbb E[X-y]^+ \right \},
\end{equation}
where $y^* = F_X^{-1}(\alpha)$ or its $\VaR$ attains the minimum.
\end{lemma}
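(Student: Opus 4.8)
The plan is to follow the classical Rockafellar--Uryasev argument. Write $\phi(y) := y + \frac{1}{1-\alpha}\mathbb E[X-y]^+$ for $y \in \mathbb R$, so the claim is that $\min_{y}\phi(y) = \CVaR(X)$ with $y^\ast = F_X^{-1}(\alpha)$ attaining the minimum. The proof has three steps: (i) show $\phi$ is convex and coercive, hence attains its minimum; (ii) identify the set of minimizers via the one-sided derivatives of $\phi$ and verify that $y^\ast$ lies in it; (iii) evaluate $\phi(y^\ast)$ and recognize the value as $\CVaR(X)$. For step (i): for each outcome the map $y \mapsto [X-y]^+$ is convex and $1$-Lipschitz, so (using $\mathbb E|X| < \infty$, automatic in the finite-MDP setting) $\phi$ is finite and convex on $\mathbb R$; coercivity follows since $\phi(y) \ge y \to +\infty$ as $y \to +\infty$, while the elementary bound $[X-y]^+ \ge X-y$ gives $\phi(y) \ge \frac{1}{1-\alpha}\mathbb E[X] - \frac{\alpha}{1-\alpha}\,y \to +\infty$ as $y \to -\infty$ because $\alpha \in (0,1)$.

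For step (ii), I would note that the difference quotients of $[X-y]^+$ in $y$ are bounded by $1$, so dominated convergence gives the right and left derivatives $\phi'_+(y) = 1 - \frac{1}{1-\alpha}\mathbb P(X > y)$ and $\phi'_-(y) = 1 - \frac{1}{1-\alpha}\mathbb P(X \ge y)$. Since $\phi$ is convex, $y^\ast$ minimizes $\phi$ if and only if $\phi'_-(y^\ast) \le 0 \le \phi'_+(y^\ast)$, i.e. if and only if $\mathbb P(X < y^\ast) \le \alpha \le \mathbb P(X \le y^\ast)$, which is exactly the defining property of an $\alpha$-quantile; the canonical choice $y^\ast = F_X^{-1}(\alpha) = \inf\{w : F_X(w) \ge \alpha\}$ satisfies it by right-continuity of $F_X$.

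For step (iii), I would plug $y^\ast = \VaR(X)$ into $\phi$. Using $\mathbb E[X - y^\ast]^+ = \mathbb E[X\,\mathbf 1\{X > y^\ast\}] - y^\ast\,\mathbb P(X > y^\ast)$ and $\mathbb P(X > y^\ast) = 1-\alpha$ (valid whenever $X$ has no atom at $y^\ast$, in particular for the continuous $X$ of the definition), the $y^\ast$ terms cancel and $\phi(y^\ast) = \frac{1}{1-\alpha}\mathbb E[X\,\mathbf 1\{X > y^\ast\}] = \mathbb E[X \mid X > y^\ast] = \CVaR(X)$, which finishes the argument.

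The only genuine obstacle is the case where $X$ places positive mass at the quantile $y^\ast$ — the discrete situation relevant to $C^d_t$ in this paper. There the minimizer of $\phi$ need not be unique, $\mathbb P(X > y^\ast)$ and $\mathbb P(X \ge y^\ast)$ straddle $1-\alpha$, and the plain conditional expectation must be replaced by the quantile-averaged form $\frac{1}{1-\alpha}\int_\alpha^1 F_X^{-1}(\gamma)\,\dif\gamma$; one recovers the stated identity by splitting $\mathbb E[X-y^\ast]^+$ across the atom and reweighting the quantile mass accordingly. Since Lemma~\ref{lemma1} merely restates the result of \cite{Rockafellar02}, this bookkeeping can either be written out explicitly or simply cited, with the continuous-case computation above serving as the heart of the proof.
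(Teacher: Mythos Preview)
Your proposal is correct and in fact goes well beyond what the paper does: the paper offers no proof of Lemma~\ref{lemma1} at all, merely introducing it as a restatement of the result discovered by \cite{Rockafellar02} and citing that reference. Your reconstruction of the Rockafellar--Uryasev argument (convexity and coercivity of $\phi$, identification of the minimizers via one-sided derivatives, evaluation at the quantile) is accurate, and your remark about the atom-at-the-quantile case is exactly the right caveat for the discrete setting of the paper. Since the paper itself is content to cite the result, either your full argument or a one-line citation would be acceptable here.
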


With Lemma~\ref{lemma1} and \eqref{eq_CVaR3}, we have
\begin{equation}\label{eq_CVaRdef2}
\CVaR^d = \min_{y \in \mathbb R}\widetilde{\CVaR}^d(y) =
\widetilde{\CVaR}^d(y)\Big|_{y=\VaR^d}.
\end{equation}
We can convert the long-run CVaR optimization problem from a
non-standard MDP model into a \emph{bilevel MDP problem}
\begin{equation}\label{eq_2level}
\CVaR^* = \min_{d \in \mathcal D}\min_{y \in \mathbb R}
\widetilde{\CVaR}^d(y) = \min_{y \in \mathbb R}\min_{d \in \mathcal
D} \widetilde{\CVaR}^d(y).
\end{equation}

\noindent\textbf{Remark~1.} The inner problem of (\ref{eq_2level}),
solving $\min\limits_{d \in \mathcal D} \widetilde{\CVaR}^d(y)$, is
a standard MDP with tuple $\langle \mathcal S, \mathcal A, \mathcal
P, \tilde{c}(y) \rangle$ since $y$ is fixed and independent of
policy $d$. Solving the original CVaR optimization problem
(\ref{eq_CVaRoptm}) is equivalent to solving a series of MDPs
$\min\limits_{d \in \mathcal D} \widetilde{\CVaR}^d(y)$, which looks
feasible but is computationally exhaustive since the number of $y\in
\mathbb R$ is huge.

Since a long-run average MDP can be formulated as a linear program
(refer to Chapter 8.8 of \cite{Puterman94}), we can also rewrite the
bilevel MDP problem \eqref{eq_2level} as the following mathematical
program
\begin{equation}\label{eq_2levelLP}
\begin{array}{ccl}
&\min\limits_{x,\ y\in\mathbb R} & \left\{ y + \frac{1}{1-\alpha}
\sum\limits_{i \in \mathcal
S, a \in \mathcal A} x(i,a)[c(i,a)-y]^+ \right \},\\
&\mbox{s.t., } &\sum\limits_{a \in \mathcal A}x(i,a) =
\sum\limits_{j \in \mathcal S, a \in \mathcal A}x(j,a) p(i|j,a),
\quad \forall i
\in \mathcal S, \\
&&\sum\limits_{i\in \mathcal S,a \in \mathcal A}x(i,a)=1, \\
&&x(i,a) \geq 0, \quad \forall i \in \mathcal S, \ a \in \mathcal A.
\end{array}
\end{equation}
Although the constraints of \eqref{eq_2levelLP} are linear, the
objective is quadratic and not convex. Therefore,
\eqref{eq_2levelLP} is not a convex optimization problem. In fact,
as illustrated by the numerical experiment in
Section~\ref{section_experiment}, this problem may have multiple
local minima, which is difficult to solve and establishes the
non-convexity.

Fortunately, we find that it is not necessary to search $y$ in the
whole real number space $\mathbb R$. By Lemma~\ref{lemma1}, we know
that $y^*$ in \eqref{eq_CVaRdef2} equals $\VaR^d$. Thus, we define
\begin{equation}
\mathbb Y := \{\VaR^d : d \in \mathcal D \}.
\end{equation}
For obtaining $\mathbb Y$, it is exhaustive to enumerate every
possible $\VaR^d$. However, we observe that
\begin{equation}
\VaR^d = \inf\{ w \in \mathbb R : \lim\limits_{t\rightarrow
\infty}\mathbb P(c(X_t, A_t) \leq w) \geq \alpha \} \in \mathbb C,
\nonumber
\end{equation}
where the set $\mathbb C$ is composed of all possible $c(i,a)$'s,
i.e.,
\begin{equation}
\mathbb C := \{c(i,a): i \in \mathcal S, a \in \mathcal A\}.
\end{equation}
Obviously, we have $\mathbb Y \subseteq \mathbb C \subset \mathbb
R$. The number of standard MDPs in the bilevel problem
\eqref{eq_2level} is significantly reduced and we directly derive
the following lemma.
\begin{lemma}\label{lemma2bi}
The long-run CVaR optimization problem \eqref{eq_CVaRoptm} is
equivalent to the following bilevel MDP problem
\begin{equation}\label{eq_bilevel2}
\CVaR^* = \min_{y \in \mathbb C}\min_{d \in \mathcal D}
\widetilde{\CVaR}^d(y),
\end{equation}
where the number of inner standard MDP problems equals $|\mathbb C|
\leq SA$.
\end{lemma}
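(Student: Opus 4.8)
The plan is to start from the unrestricted bilevel reformulation \eqref{eq_2level}, which has already been established via Lemma~\ref{lemma1} and \eqref{eq_CVaRdef2}, and to argue that the inner minimization over $y \in \mathbb{R}$ can be replaced by a minimization over the finite set $\mathbb{C}$ without changing the optimal value. The engine is the observation recorded just before the statement: for a fixed policy $d$, applying Lemma~\ref{lemma1} to $X = C^d_t$ in the steady state gives $\min_{y \in \mathbb{R}} \widetilde{\CVaR}^d(y) = \widetilde{\CVaR}^d(\VaR^d)$, and moreover $\VaR^d \in \mathbb{C}$.

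First I would fix an arbitrary $d \in \mathcal{D}$ and justify $\VaR^d \in \mathbb{C}$ carefully. Since the MDP is finite and ergodic under $d$, $\nu(P^d)^t \to \pi^d$, so $C^d_t$ converges in distribution to the random variable that takes the value $c(i,a)$ with probability $\pi^d(i,a)$; the limiting distribution function is a step function whose jumps occur only at points of the finite set $\mathbb{C}$ and which is constant on the complement of $\mathbb{C}$. Consequently the set $\{w \in \mathbb{R} : \lim_{t\to\infty}\mathbb{P}(c(X_t,A_t) \le w) \ge \alpha\}$ is of the form $[w_0,\infty)$ with $w_0 \in \mathbb{C}$, whence $\VaR^d = w_0 \in \mathbb{C}$. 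The interchange of the $t \to \infty$ limit defining $\VaR^d$ in \eqref{eq_VaR1} with the infimum defining the quantile uses that for a finite-state ergodic chain the convergence $\nu(P^d)^t \to \pi^d$ is geometric and that the relevant distribution functions eventually take their limiting values on the fixed finite grid $\mathbb{C}$.

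Next I would combine this with Lemma~\ref{lemma1}. For the fixed $d$, \eqref{eq_CVaRdef2} gives $\min_{y \in \mathbb{R}} \widetilde{\CVaR}^d(y) = \widetilde{\CVaR}^d(\VaR^d)$, and since $\VaR^d \in \mathbb{C} \subseteq \mathbb{R}$ we obtain the sandwich
\[
\min_{y \in \mathbb{R}} \widetilde{\CVaR}^d(y) \;=\; \widetilde{\CVaR}^d(\VaR^d) \;\ge\; \min_{y \in \mathbb{C}} \widetilde{\CVaR}^d(y) \;\ge\; \min_{y \in \mathbb{R}} \widetilde{\CVaR}^d(y),
\]
so all three quantities coincide. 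Taking $\min_{d \in \mathcal{D}}$ on both ends and invoking \eqref{eq_2level} yields $\CVaR^* = \min_{d \in \mathcal{D}}\min_{y \in \mathbb{C}} \widetilde{\CVaR}^d(y) = \min_{y \in \mathbb{C}}\min_{d \in \mathcal{D}} \widetilde{\CVaR}^d(y)$, the last interchange being legitimate since $\mathbb{C}$ and the iteration are such that a double minimum over a product index set is order-independent. Finally, $\mathbb{C} = \{c(i,a) : i \in \mathcal{S},\, a \in \mathcal{A}\}$ is the image of a function defined on a set of cardinality $SA$, so $|\mathbb{C}| \le SA$; by Remark~1 each fixed $y \in \mathbb{C}$ gives a standard long-run-average MDP $\langle \mathcal{S},\mathcal{A},\mathcal{P},\tilde{c}(y)\rangle$, so the number of inner standard MDP problems is $|\mathbb{C}| \le SA$, as claimed.

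The main obstacle, modest as it is, lies in the first step: rigorously identifying $\VaR^d$ with the $\alpha$-quantile of the limiting steady-state cost distribution and showing this quantile belongs to $\mathbb{C}$, since this requires interchanging a limit with the (generally discontinuous) quantile operation. Finiteness and ergodicity of the chain make this routine, but it is the only place where care is needed; everything afterwards is bookkeeping — a sandwich inequality, a reordering of two minima over finite sets, and a counting bound.
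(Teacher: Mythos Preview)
Your proposal is correct and follows essentially the same route as the paper: the paper derives the lemma directly from the preceding observation that $\VaR^d \in \mathbb{C}$ (hence $\mathbb{Y}\subseteq\mathbb{C}$) combined with the bilevel reformulation \eqref{eq_2level}, which is exactly your sandwich argument. Your treatment is more detailed than the paper's, particularly in justifying $\VaR^d\in\mathbb{C}$ via the limiting distribution, but the underlying idea is identical.
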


With Lemma~\ref{lemma2bi}, although the complexity of the long-run
CVaR MDP problem is significantly reduced, it is still
computationally intensive. In the rest of the paper, we resort to
other approaches to solve this problem more efficiently. Different
from traditional dynamic programming, the SBO theory studies the
policy optimization of Markov systems by analyzing performance
sensitivity information \citep{Cao07,Xia14}, namely the difference
formula and the derivative
formula. 

First, with Lemma~\ref{lemma2bi}, we can directly derive the
optimality of deterministic policies for the long-run CVaR MDP
problem.
\begin{theorem}\label{theorem3}
The optimum $\CVaR^*$ can be achieved by a deterministic stationary
policy.
\end{theorem}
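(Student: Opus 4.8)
The plan is to exploit the bilevel representation in Lemma~\ref{lemma2bi}, namely $\CVaR^* = \min_{y \in \mathbb C}\min_{d \in \mathcal D}\widetilde{\CVaR}^d(y)$, and to argue that the two minimizations can each be attained by a deterministic policy. First I would fix an arbitrary $y \in \mathbb C$ and observe that the inner problem $\min_{d \in \mathcal D}\widetilde{\CVaR}^d(y)$ is, as noted in Remark~1, a genuine standard long-run average-cost MDP with the fixed cost function $\tilde{c}(y,\cdot,\cdot)$ defined in \eqref{eq_CVaRf}. Since our MDPs are finite and every stationary policy induces an ergodic (unichain) Markov chain, classical MDP theory (e.g.\ \cite{Puterman94}, Chapter~8) guarantees that this average-cost problem admits an optimal \emph{deterministic} stationary policy $d_y^* \in \mathcal D_0$; equivalently, the linear program \eqref{eq_2levelLP} with $y$ held fixed has an optimal extreme-point solution $x$, and extreme points of the polytope of occupation measures correspond to deterministic policies.

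Next I would take the outer minimization over $y \in \mathbb C$. Because $\mathbb C$ is a finite set (with $|\mathbb C| \le SA$), we may simply enumerate: for each $y \in \mathbb C$ pick the deterministic optimizer $d_y^*$ of the inner MDP, and then let $y^\star \in \argmin_{y \in \mathbb C}\widetilde{\CVaR}^{d_y^*}(y)$. Setting $d^* := d_{y^\star}^*$, which is deterministic, we get
\begin{equation}
\widetilde{\CVaR}^{d^*}(y^\star) = \min_{y \in \mathbb C}\min_{d \in \mathcal D}\widetilde{\CVaR}^d(y) = \CVaR^*,
\nonumber
\end{equation}
so the value $\CVaR^*$ is achieved by the deterministic policy $d^*$. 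To finish I would verify that $\CVaR^{d^*}$, the quantity actually appearing in \eqref{eq_CVaRoptm}, indeed equals $\widetilde{\CVaR}^{d^*}(y^\star)$: by Lemma~\ref{lemma1} and \eqref{eq_CVaRdef2} we have $\CVaR^{d^*} = \min_{y \in \mathbb R}\widetilde{\CVaR}^{d^*}(y) \le \widetilde{\CVaR}^{d^*}(y^\star) = \CVaR^* \le \CVaR^{d^*}$, where the last inequality is just optimality of $\CVaR^*$ over all of $\mathcal D \supseteq \mathcal D_0$; hence equality throughout and $d^*$ is a deterministic optimal policy.

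The only delicate point is the interchange between ``optimizing the surrogate $\widetilde{\CVaR}^d(y)$ jointly in $(d,y)$'' and ``optimizing the true $\CVaR^d$''; this is exactly what Lemma~\ref{lemma2bi} (built on Lemma~\ref{lemma1} and \eqref{eq_CVaR3}) supplies, so there is no real obstacle once that lemma is in hand. A minor subtlety worth a sentence is that the inner deterministic optimizer $d_y^*$ must be chosen for the fixed $y$ being optimized, not for its own $\VaR^{d_y^*}$ — but since we then range $y$ over all of $\mathbb C \supseteq \mathbb Y = \{\VaR^d : d \in \mathcal D\}$, the optimal pair is certainly captured. Thus the main ``work'' is simply invoking the finiteness of $\mathbb C$ together with the standard existence of deterministic optimal policies for finite average-cost MDPs.
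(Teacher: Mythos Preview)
Your proposal is correct and takes essentially the same approach as the paper: reduce to the bilevel problem of Lemma~\ref{lemma2bi}, fix the outer variable $y$, and invoke classical average-cost MDP theory (Puterman) to get a deterministic optimal policy for the inner problem. Your write-up is in fact more careful than the paper's, which simply picks an optimal pair $(y^*,d^*)$ and cites \cite{Puterman94}; you additionally spell out the finiteness of $\mathbb C$ for the outer minimum and close the loop by showing $\CVaR^{d^*}=\widetilde{\CVaR}^{d^*}(y^\star)=\CVaR^*$ via \eqref{eq_CVaRdef2}, a verification the paper leaves implicit.
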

\begin{proof}
We assume that $(y^*, d^*)$ is the optimal solution of
\eqref{eq_bilevel2}. Obviously, the inner problem $\min\limits_{d
\in \mathcal D} \widetilde{\CVaR}^d(y^*)$ is a standard MDP, and it
is well known that its optimal policy $d^*$ can be a deterministic
stationary policy \citep{Puterman94}.
\end{proof}

Therefore, we can focus only on the deterministic stationary policy
space $\mathcal D_0$. With the SBO theory, we compare the long-run
average pseudo CVaR difference of MDPs under any two policies $d$
and $d'$, where $d$ is the current policy and $d'$ is any other new
policy. We directly have the following \emph{performance difference
formula} (refer to Chapter~4.1 of \cite{Cao07})
\begin{equation} \label{eq_diff}
\widetilde{\CVaR}^{d'}(y) - \widetilde{\CVaR}^{d}(y) = \sum_{i \in
\mathcal S} \pi^{d'}(i) \left [ \sum_{j \in \mathcal S}
[p(j|i,d'(i)) - p(j|i,d(i))]g^d(y,j) + \tilde{c}(y,i,d'(i)) -
\tilde{c}(y,i,d(i)) \right],
\end{equation}
where $g^d(y)$ is a column vector called \emph{performance
potentials} whose element is defined as
\begin{equation}\label{eq_g0}
g^d(y,i) := \lim\limits_{T \rightarrow \infty} \mathbb E^d \left\{
\sum_{t=0}^{T} [\tilde{c}(y,X_t, A_t) -
\widetilde{\CVaR}^{d}(y)]\Big|X_0=i \right\}, \qquad i \in \mathcal
S.
\end{equation}
In the literature, $g^d(y)$ is also called the \emph{relative value
function} \citep{Puterman94}, which can be determined by the
following \emph{Poisson equation}
\begin{equation}\label{eq_g1}
g^d(y,i) = \tilde{c}(y,i,d(i)) - \widetilde{\CVaR}^{d}(y) + \sum_{j
\in \mathcal S} p(j|i,d(i)) g^d(y,j), \quad i \in \mathcal S.
\end{equation}
By setting $y=\VaR^d$, with \eqref{eq_CVaR3} and \eqref{eq_diff}, we
derive the CVaR difference formula
\begin{eqnarray} \label{eq_diffCVaR}
\CVaR^{d'} - \CVaR^{d} &=& \sum_{i \in \mathcal S} \pi^{d'}(i) \left
[ \sum_{j \in \mathcal S} [p(j|i,d'(i)) - p(j|i,d(i))]g^d(y,j) +
\tilde{c}(y,i,d'(i)) - \tilde{c}(y,i,d(i)) \right] \nonumber\\
&& + \CVaR^{d'} - \widetilde{\CVaR}^{d'}(y), \qquad \mbox{where }
y=\VaR^d.
\end{eqnarray}
The last term of the above equation quantifies the difference
between the real CVaR and the pseudo CVaR of policy $d'$, distorted
by policy $d$. For notation simplicity, we define
\begin{equation}\label{eq_Delta}
\Delta_{\CVaR}(d', d) := \CVaR^{d'} - \widetilde{\CVaR}^{d'}(\VaR^d)
\leq 0, \quad  \forall d,d' \in \mathcal D_0,
\end{equation}
where the inequality directly follows \eqref{eq_CVaRdef2}.
Therefore, we can rewrite \eqref{eq_diffCVaR} and derive the
following lemma about \emph{the long-run CVaR difference formula}.

\begin{lemma}\label{lemma2}
If the deterministic policy is changed from $d$ to a new policy
$d'$, where $\forall d,d' \in \mathcal D_0$, then the difference of
their long-run CVaR is quantified by
\begin{eqnarray}\label{eq_diff_CVaR2}
\CVaR^{d'} - \CVaR^{d} &=& \sum_{i \in \mathcal S} \pi^{d'}(i)
\Bigg[ \sum_{j \in \mathcal S} [p(j|i,d'(i)) -
p(j|i,d(i))]g^d(\VaR^d,j) \nonumber\\
&& + \tilde{c}(\VaR^d,i,d'(i)) - \tilde{c}(\VaR^d,i,d(i)) \Bigg] +
\Delta_{\CVaR}(d',d).
\end{eqnarray}
\end{lemma}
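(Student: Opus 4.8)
The plan is to read off \eqref{eq_diff_CVaR2} from the performance difference formula \eqref{eq_diff} for long-run average-cost MDPs by specializing $y$ and rearranging so that the gap between the real and pseudo CVaR of $d'$ is collected into $\Delta_{\CVaR}(d',d)$; indeed, \eqref{eq_diff_CVaR2} is nothing more than \eqref{eq_diffCVaR} rewritten with the abbreviation \eqref{eq_Delta}, so the substance of the argument lies in justifying \eqref{eq_diff} and in carefully tracking the substitution $y=\VaR^d$.

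First I would fix $y\in\mathbb R$ and regard $\langle\mathcal S,\mathcal A,\mathcal P,\tilde c(y)\rangle$ as an ordinary ergodic average-cost MDP. Because every policy induces an ergodic chain, the gain $\widetilde{\CVaR}^d(y)$ is well defined and the Poisson equation \eqref{eq_g1} has a solution $g^d(y)$, unique up to an additive constant; the classical difference formula of \cite{Cao07} then gives \eqref{eq_diff}. For completeness I would include its short derivation: from $\widetilde{\CVaR}^{d'}(y)=\sum_i\pi^{d'}(i)\tilde c(y,i,d'(i))$ and $\sum_i\pi^{d'}(i)=1$, write $\widetilde{\CVaR}^{d'}(y)-\widetilde{\CVaR}^{d}(y)=\sum_i\pi^{d'}(i)[\tilde c(y,i,d'(i))-\widetilde{\CVaR}^{d}(y)]$, eliminate $\widetilde{\CVaR}^{d}(y)$ using \eqref{eq_g1}, and simplify the remaining $g^d$-terms with the stationarity identity $\sum_i\pi^{d'}(i)g^d(y,i)=\sum_i\pi^{d'}(i)\sum_j p(j|i,d'(i))g^d(y,j)$.

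Next I would set $y=\VaR^d$. By \eqref{eq_CVaR3} the term $\widetilde{\CVaR}^{d}(\VaR^d)$ equals $\CVaR^d$, so the left-hand side of \eqref{eq_diff} becomes $\widetilde{\CVaR}^{d'}(\VaR^d)-\CVaR^d$. By the definition \eqref{eq_Delta} we have $\widetilde{\CVaR}^{d'}(\VaR^d)=\CVaR^{d'}-\Delta_{\CVaR}(d',d)$; substituting this and transposing $\Delta_{\CVaR}(d',d)$ to the right-hand side yields precisely \eqref{eq_diff_CVaR2}. The accompanying sign claim $\Delta_{\CVaR}(d',d)\le 0$ follows at once from Lemma~\ref{lemma1} in the form \eqref{eq_CVaRdef2}, since $\CVaR^{d'}=\min_{y}\widetilde{\CVaR}^{d'}(y)\le\widetilde{\CVaR}^{d'}(\VaR^d)$.

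I do not anticipate a genuine obstacle; the proof is essentially bookkeeping. The one point worth care is the legitimacy of invoking the classical average-cost difference formula: it requires the fixed-$y$ problem to be a bona fide standard MDP with well-defined gain and solvable Poisson equation, which the standing ergodicity assumption supplies uniformly in $y$. The other thing to keep straight is the distinction between the \emph{pseudo} CVaR $\widetilde{\CVaR}^{d'}(\VaR^d)$ (policy $d'$ evaluated at $\VaR^d$ instead of at its own $\VaR^{d'}$) and the \emph{real} CVaR $\CVaR^{d'}=\widetilde{\CVaR}^{d'}(\VaR^{d'})$; the whole content of the lemma is that this mismatch is absorbed into the single term $\Delta_{\CVaR}(d',d)$ rather than into any new estimate.
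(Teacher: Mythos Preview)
Your proposal is correct and follows essentially the same route as the paper: invoke the classical average-cost performance difference formula \eqref{eq_diff} for the fixed-$y$ MDP, specialize $y=\VaR^d$ so that $\widetilde{\CVaR}^d(\VaR^d)=\CVaR^d$ via \eqref{eq_CVaR3}, and absorb the mismatch $\CVaR^{d'}-\widetilde{\CVaR}^{d'}(\VaR^d)$ into $\Delta_{\CVaR}(d',d)$ via \eqref{eq_Delta}. The only difference is that you propose to include a short derivation of \eqref{eq_diff} from the Poisson equation, whereas the paper simply cites \cite{Cao07}; this is a harmless elaboration rather than a different argument.
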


\noindent\textbf{Remark~2.} From the right-hand-side of the long-run
CVaR difference formula \eqref{eq_diff_CVaR2}, we can see that the
first part is the long-run average performance difference for a
standard MDP model, where the cost function $\tilde{c}(y,i,a)$ has a
constant $y=\VaR^{d}$ estimated under the current policy $d$. The
second part $\Delta_{\CVaR}(d',d)$ is computationally consuming, but
its value is always \emph{non-positive}. Therefore, we can develop
an approach to generate an improved policy $d'$ based on the above
difference formula, which is described by the following theorem.

\begin{theorem}\label{theorem1}
For the current policy $d$, if we find a new policy $d' \in \mathcal
D_0$ which satisfies
\begin{equation}\label{eq_generateD}
\sum_{j \in \mathcal S} p(j|i,d'(i)) g^{d}(\VaR^d,j) +
\tilde{c}(\VaR^d,i,d'(i)) \leq \sum_{j \in \mathcal S} p(j|i,d(i))
g^{d}(\VaR^d,j) + \tilde{c}(\VaR^d,i,d(i)), \forall i \in \mathcal
S,
\end{equation}
then we have $\CVaR^{d'} \leq \CVaR^{d}$. If the above inequality
strictly holds for at least one state $i$, then $\CVaR^{d'} <
\CVaR^{d}$.
\end{theorem}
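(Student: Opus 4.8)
The plan is to read off the conclusion directly from the long-run CVaR difference formula \eqref{eq_diff_CVaR2} in Lemma~\ref{lemma2}, using only the sign information already established. First I would rewrite the hypothesis \eqref{eq_generateD} by moving the $d(i)$-terms to the left-hand side, so that for every $i \in \mathcal S$ it states precisely
\begin{equation}
\sum_{j \in \mathcal S} [p(j|i,d'(i)) - p(j|i,d(i))]\,g^{d}(\VaR^d,j) + \tilde{c}(\VaR^d,i,d'(i)) - \tilde{c}(\VaR^d,i,d(i)) \leq 0. \nonumber
\end{equation}
This is exactly the quantity that appears inside the square brackets of \eqref{eq_diff_CVaR2}, weighted by $\pi^{d'}(i)$.

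Next, since $\pi^{d'}$ is a steady-state distribution we have $\pi^{d'}(i) \geq 0$ for all $i$ (indeed $\pi^{d'}(i) > 0$ by the standing ergodicity assumption on every policy in $\mathcal D$), so each summand on the right-hand side of \eqref{eq_diff_CVaR2} is nonpositive. Combining this with the already-established inequality $\Delta_{\CVaR}(d',d) \leq 0$ from \eqref{eq_Delta}, which is itself an immediate consequence of \eqref{eq_CVaRdef2} (Lemma~\ref{lemma1}), the entire right-hand side of \eqref{eq_diff_CVaR2} is nonpositive. Hence $\CVaR^{d'} - \CVaR^{d} \leq 0$, which proves the first claim.

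For the strict statement, suppose the hypothesis \eqref{eq_generateD} is strict at some state $i_0 \in \mathcal S$. Then the bracketed term in \eqref{eq_diff_CVaR2} corresponding to $i_0$ is strictly negative, and by ergodicity $\pi^{d'}(i_0) > 0$, so the $i_0$-summand is strictly negative; all other summands remain $\leq 0$ and $\Delta_{\CVaR}(d',d) \leq 0$, so the total is strictly negative, giving $\CVaR^{d'} < \CVaR^{d}$. I do not expect a genuine obstacle here: the work has all been front-loaded into Lemma~\ref{lemma2} and the sign of $\Delta_{\CVaR}$. The only point requiring a moment's care is invoking ergodicity to guarantee $\pi^{d'}(i_0) > 0$ in the strict case; without it one would only get $\leq$. (If one later wishes to relax to the unichain setting, this is the single place the argument would need adjustment, e.g.\ by restricting attention to recurrent states or choosing $i_0$ to be recurrent under $d'$.)
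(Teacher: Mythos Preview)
Your proposal is correct and follows essentially the same approach as the paper: substitute the hypothesis \eqref{eq_generateD} into the CVaR difference formula \eqref{eq_diff_CVaR2}, use ergodicity to ensure $\pi^{d'}(i)>0$, and invoke $\Delta_{\CVaR}(d',d)\le 0$ from \eqref{eq_Delta} to conclude both the nonstrict and strict inequalities. The paper's proof is just a terser version of what you wrote.
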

\begin{proof}
For the two policies $d$ and $d'$, we substitute
\eqref{eq_generateD} into the difference formula
\eqref{eq_diff_CVaR2}
\begin{equation}
\CVaR^{d'} - \CVaR^{d} \leq 0 + \Delta_{\CVaR}(d',d) \leq 0,
\nonumber
\end{equation}
where the first inequality uses the fact that $\pi^{d'}(i) > 0$
since the MDP is always ergodic, and the second inequality directly
follows \eqref{eq_Delta}.

If \eqref{eq_generateD} strictly holds for at least one state $i$,
we can directly have
\begin{equation}
\CVaR^{d'} - \CVaR^{d} < 0 + \Delta_{\CVaR}(d',d) \leq 0. \nonumber
\end{equation}
Therefore, the theorem is proved.
\end{proof}

Theorem~\ref{theorem1} indicates an approach to generate improved
policies: we only need to find new policies $d'$ satisfying
\eqref{eq_generateD}, where the values of vectors $g^{d}(\VaR^d)$
and $\tilde{c}(\VaR^d)$ are computable or estimatable based on the
system sample path of the current policy $d$. One example of
generating improved policies is similar to policy improvement in
classical policy iteration:
\begin{equation}
d'(i) = \argmin\limits_{a \in \mathcal A}\left\{
\tilde{c}(\VaR^d,i,a) + \sum_{j \in \mathcal S} p(j|i,a)
g^{d}(\VaR^d,j) \right\}, \quad i \in \mathcal S. \nonumber
\end{equation}
We further derive the following theorem about the necessary
condition of optimal policies of the long-run CVaR MDP.
\begin{theorem}\label{theorem4}
The long-run CVaR optimal deterministic policy $d^*$ must satisfy
the so-called Bellman local optimality equations
\begin{equation}\label{eq_BellmanEq1}
d^*(i) = \argmin\limits_{a \in \mathcal A}\left\{
\tilde{c}(\VaR^{d^*},i,a) + \sum_{j \in \mathcal S} p(j|i,a)
g^{d^*}(\VaR^{d^*},j) \right\}, \quad i \in \mathcal S.
\end{equation}
\begin{equation}\label{eq_BellmanEq2}
g^{d^*}(\VaR^{d^*},i)  + \CVaR^{d^*} = \min\limits_{a \in \mathcal
A}\left\{ \tilde{c}(\VaR^{d^*},i,a) + \sum_{j \in \mathcal S}
p(j|i,a) g^{d^*}(\VaR^{d^*},j) \right\} , \quad i \in \mathcal S.
\end{equation}
\end{theorem}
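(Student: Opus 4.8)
The plan is to obtain \eqref{eq_BellmanEq1} by a single-state perturbation argument built on Theorem~\ref{theorem1}, and then to deduce \eqref{eq_BellmanEq2} from the Poisson equation \eqref{eq_g1}. Throughout, write $Q^{d^*}(i,a) := \tilde{c}(\VaR^{d^*},i,a) + \sum_{j\in\mathcal S} p(j|i,a)\, g^{d^*}(\VaR^{d^*},j)$ for brevity.

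First I would prove \eqref{eq_BellmanEq1} by contradiction. Suppose $d^*$ is CVaR optimal but \eqref{eq_BellmanEq1} fails at some state $i_0$, i.e.\ $Q^{d^*}(i_0,d^*(i_0)) > \min_{a\in\mathcal A} Q^{d^*}(i_0,a)$, so there is an action $a_0$ with $Q^{d^*}(i_0,a_0) < Q^{d^*}(i_0,d^*(i_0))$. Define a deterministic policy $d' \in \mathcal D_0$ by $d'(i_0) := a_0$ and $d'(i) := d^*(i)$ for all $i \ne i_0$. Then $d'$ satisfies the improvement condition \eqref{eq_generateD} relative to $d = d^*$: at every state $i \ne i_0$ both sides coincide because $d'(i) = d^*(i)$, and at $i_0$ the inequality holds strictly. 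Since the MDP is ergodic under $d'$, Theorem~\ref{theorem1} yields $\CVaR^{d'} < \CVaR^{d^*}$, contradicting the optimality of $d^*$. Hence $d^*(i) \in \argmin_{a\in\mathcal A} Q^{d^*}(i,a)$ for every $i \in \mathcal S$, which is \eqref{eq_BellmanEq1}.

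Next, for \eqref{eq_BellmanEq2} I would combine \eqref{eq_BellmanEq1} with the Poisson equation \eqref{eq_g1} taken at $y = \VaR^{d^*}$ under policy $d = d^*$, namely
\begin{equation*}
g^{d^*}(\VaR^{d^*},i) = \tilde{c}(\VaR^{d^*},i,d^*(i)) - \widetilde{\CVaR}^{d^*}(\VaR^{d^*}) + \sum_{j\in\mathcal S} p(j|i,d^*(i))\, g^{d^*}(\VaR^{d^*},j), \quad i \in \mathcal S.
\end{equation*}
Using $\widetilde{\CVaR}^{d^*}(\VaR^{d^*}) = \CVaR^{d^*}$ from \eqref{eq_CVaR3} (equivalently \eqref{eq_CVaRdef2}) and rearranging gives $g^{d^*}(\VaR^{d^*},i) + \CVaR^{d^*} = Q^{d^*}(i,d^*(i))$, and by \eqref{eq_BellmanEq1} the right-hand side equals $\min_{a\in\mathcal A} Q^{d^*}(i,a)$, which is exactly \eqref{eq_BellmanEq2}.

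The subtle point, and the step I expect to be the main obstacle to state cleanly, is why Theorem~\ref{theorem1} applies to a single-state deviation irrespective of the term $\Delta_{\CVaR}(d',d^*)$: because $\Delta_{\CVaR}(\cdot,\cdot)\le 0$ always (by \eqref{eq_Delta}), any strict decrease in the ``standard-MDP'' part of the difference formula \eqref{eq_diff_CVaR2} already forces $\CVaR^{d'} < \CVaR^{d^*}$. This one-directional nature of the argument is precisely why \eqref{eq_BellmanEq1}--\eqref{eq_BellmanEq2} are only \emph{necessary}, not sufficient, for global optimality, and hence why they are called \emph{local} optimality equations; I would add a brief remark to this effect. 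An alternative derivation I would mention: if $d^*$ minimizes $\CVaR$ then it must also be optimal for the standard average-cost MDP $\langle \mathcal S,\mathcal A,\mathcal P,\tilde{c}(\VaR^{d^*})\rangle$ (otherwise some $d'$ would give $\widetilde{\CVaR}^{d'}(\VaR^{d^*}) < \widetilde{\CVaR}^{d^*}(\VaR^{d^*}) = \CVaR^{d^*}$, hence $\CVaR^{d'}\le \widetilde{\CVaR}^{d'}(\VaR^{d^*}) < \CVaR^{d^*}$ by \eqref{eq_CVaRdef2}, a contradiction), and then one invokes the classical Bellman optimality equation for that standard MDP; the additive-constant ambiguity in $g^{d^*}$ is harmless since it cancels on both sides of \eqref{eq_BellmanEq2}.
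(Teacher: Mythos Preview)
Your proposal is correct and follows essentially the same approach as the paper: the paper also derives \eqref{eq_BellmanEq1} directly from Theorem~\ref{theorem1} (your single-state perturbation is just the explicit contrapositive of ``\eqref{eq_generateD} can never hold strictly''), and then obtains \eqref{eq_BellmanEq2} by plugging \eqref{eq_BellmanEq1} into the Poisson equation \eqref{eq_g1} at $y=\VaR^{d^*}$ together with $\widetilde{\CVaR}^{d^*}(\VaR^{d^*})=\CVaR^{d^*}$. Your alternative derivation via optimality of $d^*$ for the standard MDP $\langle \mathcal S,\mathcal A,\mathcal P,\tilde{c}(\VaR^{d^*})\rangle$ is a nice extra observation not in the paper's proof.
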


\begin{proof}
The necessary condition \eqref{eq_BellmanEq1} can be directly proved
by \eqref{eq_generateD} in Theorem~\ref{theorem1}. Below, we prove
\eqref{eq_BellmanEq2} based on the property of performance
potentials. With the definition \eqref{eq_g0}, we have
\begin{equation}
g^{d^*}(\VaR^{d^*},i) := \lim\limits_{T \rightarrow \infty} \mathbb
E^{d^*} \left\{ \sum_{t=0}^{T} [\tilde{c}(\VaR^{d^*}, X_t, A_t) -
\CVaR^{d^*}]\Big|X_0=i \right\}, \nonumber
\end{equation}
Similar to \eqref{eq_g1}, we can also derive
\begin{equation}
g^{d^*}(\VaR^{d^*},i)  + \CVaR^{d^*} =
\tilde{c}(\VaR^{d^*},i,d^*(i)) + \sum_{j \in \mathcal S}
p(j|i,d^*(i)) g^{d^*}(\VaR^{d^*},j). \nonumber
\end{equation}
Substituting \eqref{eq_BellmanEq1} into the above equation, we can
directly derive \eqref{eq_BellmanEq2}. Therefore,
\eqref{eq_BellmanEq1} and \eqref{eq_BellmanEq2} are equivalent and
the theorem is proved.
\end{proof}

We call \eqref{eq_BellmanEq1} or \eqref{eq_BellmanEq2} \emph{the
Bellman local optimality equation} for long-run CVaR MDPs, which is
analogous to the classical Bellman optimality equation for long-run
average or discounted MDPs in the literature. However, different
from the classical Bellman optimality equation (which is necessary
and sufficient for optimal policies), \eqref{eq_BellmanEq2} is only
necessary and not sufficient for a long-run CVaR optimal policy.
This is partly because the term $\tilde{c}(\VaR^{d^*},i,a)$ in
\eqref{eq_BellmanEq2} depends on the whole policy $d^*$, while the
cost function $c(i,a)$ in the classical Bellman optimality equation
only depends on the current state and action. Another explanation is
that the CVaR difference formula \eqref{eq_diff_CVaR2} has an extra
term $\Delta_{\CVaR}(d',d)$ which does not arise in a standard MDP
model.

Nevertheless, we can still use \eqref{eq_BellmanEq1} or
\eqref{eq_BellmanEq2} to find optimal policies. However, there may
exist multiple fixed point solutions to \eqref{eq_BellmanEq2}, while
the counterpart for the classical Bellman optimality equation is
unique. These multiple solutions can be further recognized as local
optima in a mixed policy space. Below, we discuss the performance
derivative of mixed policies.

For any two deterministic policies $d, d' \in \mathcal D_0$, we
define $d^{\delta, d'}$ as a \emph{mixed policy} between $d$ and
$d'$: adopt policy $d'$ with probability $\delta$ and adopt policy
$d$ with probability $1-\delta$, where $\delta$ is the mixed
probability and $\delta \in [0,1]$. Obviously, we have $d^{0, d'} =
d$ and $d^{1, d'} = d'$. By replacing $d'$ with $d^{\delta, d'}$ in
\eqref{eq_diff_CVaR2}, we compare the long-run CVaR difference of
the MDP under policy $d$ and $d^{\delta, d'}$. For notation
simplicity, we use the superscript `$\delta$' to identify the
associated quantities of policy $d^{\delta, d'}$. Thus,
\eqref{eq_diff_CVaR2} can be rewritten as
\begin{eqnarray}\label{eq_diff_delta}
\CVaR^{\delta} - \CVaR^{d} &=& \sum_{i \in \mathcal S}
\pi^{\delta}(i) \delta \Bigg[\sum_{j \in \mathcal S}
[p(j|i,d'(i)) - p(j|i,d(i))] g^{d}(\VaR^d,j) \nonumber\\
&& + \tilde{c}(\VaR^d,i,d'(i)) - \tilde{c}(\VaR^d,i,d(i)) \Bigg] +
\Delta_{\CVaR}(\delta,d).
\end{eqnarray}
When $\delta \rightarrow 0$, we can validate that
$\Delta_{\CVaR}(\delta,d)$ goes to 0 with higher order of $\delta$.
Therefore, we can derive the following lemma about \emph{the
long-run CVaR derivative formula}, and the detailed proof can be
found in the Appendix.

\begin{lemma}\label{lemma4}
For any two deterministic policies $d, d' \in \mathcal D_0$, the
derivative of the long-run CVaR  with respect to the mixed
probability $\delta$ is
\begin{equation}
\frac{\partial \CVaR^{\delta}}{\partial \delta}\Bigg|_{\delta = 0} =
\sum_{i \in \mathcal S} \pi^{d}(i) \Bigg[\sum_{j \in \mathcal S}
[p(j|i,d'(i)) - p(j|i,d(i))] g^{d}(\VaR^d,j)  +
\tilde{c}(\VaR^d,i,d'(i)) - \tilde{c}(\VaR^d,i,d(i)) \Bigg].
\end{equation}
\end{lemma}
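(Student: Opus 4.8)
The plan is to work directly from the exact identity \eqref{eq_diff_delta}. Dividing \eqref{eq_diff_delta} by $\delta>0$ writes $(\CVaR^{\delta}-\CVaR^{d})/\delta$ as $\sum_{i\in\mathcal S}\pi^{\delta}(i)$ times the bracketed quantity appearing in \eqref{eq_diff_delta} (which does not involve $\delta$: it is built only from $d$, $d'$, $g^{d}(\VaR^{d},\cdot)$ and $\tilde{c}(\VaR^{d},\cdot,\cdot)$), plus the residual $\Delta_{\CVaR}(\delta,d)/\delta$. So the lemma reduces to two claims: (i) $\pi^{\delta}(i)\to\pi^{d}(i)$ as $\delta\downarrow 0$; and (ii) $\Delta_{\CVaR}(\delta,d)=o(\delta)$. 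Granting these and letting $\delta\downarrow 0$ produces the stated (one-sided) derivative.

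Claim (i) is routine Markov-chain perturbation theory. The mixed policy $d^{\delta,d'}$ has transition matrix $P^{d^{\delta,d'}}=P^{d}+\delta\,(P^{d'}-P^{d})$, affine in $\delta$; since $d^{\delta,d'}\in\mathcal D$ the associated chain is ergodic for all $\delta\in[0,1]$ and $P^{d}$ is ergodic at $\delta=0$, so $\pi^{\delta}$ is a rational, hence continuous, function of $\delta$ with $\pi^{0}=\pi^{d}$. Therefore $\sum_{i}\pi^{\delta}(i)[\,\cdots\,]\to\sum_{i}\pi^{d}(i)[\,\cdots\,]$, which is exactly the right-hand side of the claimed formula; this is precisely the standard long-run average performance-derivative formula for the MDP $\langle\mathcal S,\mathcal A,\mathcal P,\tilde{c}(\VaR^{d})\rangle$ under the perturbation $d\to d'$.

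Claim (ii) is the crux. By Lemma~\ref{lemma1} and \eqref{eq_CVaRdef2}, $\Delta_{\CVaR}(\delta,d)=\CVaR^{d^{\delta,d'}}-\widetilde{\CVaR}^{d^{\delta,d'}}(\VaR^{d})=\min_{y\in\mathbb R}\widetilde{\CVaR}^{d^{\delta,d'}}(y)-\widetilde{\CVaR}^{d^{\delta,d'}}(\VaR^{d})\le 0$. Two structural facts drive the estimate: for each fixed $\delta$, $y\mapsto\widetilde{\CVaR}^{d^{\delta,d'}}(y)$ is convex and piecewise linear with all breakpoints in the finite set $\mathbb C$, so its minimizer $\VaR^{d^{\delta,d'}}$ lies in $\mathbb C$ (as observed just before Lemma~\ref{lemma2bi}); and the steady-state cost distribution, hence $\VaR^{d^{\delta,d'}}$, depends on $\delta$ only through $\pi^{\delta}\to\pi^{d}$. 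In the typical case where $\VaR^{d}$ is a \emph{strict} $\alpha$-quantile of the steady-state cost under $d$ (cdf strictly below $\alpha$ just to the left of $\VaR^{d}$ and strictly above $\alpha$ at $\VaR^{d}$), these strict inequalities persist for all small $\delta$, so $\VaR^{d^{\delta,d'}}=\VaR^{d}$ and hence $\Delta_{\CVaR}(\delta,d)=\widetilde{\CVaR}^{d^{\delta,d'}}(\VaR^{d})-\widetilde{\CVaR}^{d^{\delta,d'}}(\VaR^{d})=0$ for $\delta$ near $0$, which more than suffices.

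The point I expect to need real care is the degenerate case in which the cdf of the steady-state cost under $d$ equals $\alpha$ exactly at a cost value, so that $M_{0}:=\argmin_{y}\widetilde{\CVaR}^{d}(y)$ is a nondegenerate subinterval of $[\min\mathbb C,\max\mathbb C]$ and $\VaR^{d^{\delta,d'}}$ may jump by an $O(1)$ amount to a neighbouring point of $\mathbb C$ as $\delta$ increases from $0$. A transparent way to track this is the envelope (Danskin) theorem: since $\CVaR^{\delta}=\min_{y}\widetilde{\CVaR}^{d^{\delta,d'}}(y)$ with inner objective smooth in $\delta$ for each fixed $y$, the right-derivative of $\CVaR^{\delta}$ at $0$ equals $\min_{y\in M_{0}}\tfrac{\partial}{\partial\delta}\widetilde{\CVaR}^{d^{\delta,d'}}(y)\big|_{\delta=0}$, and for each fixed $y$ the inner derivative is exactly the bracketed sensitivity expression evaluated at that $y$. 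The lemma corresponds to evaluating at $y=\VaR^{d}=\inf M_{0}$, so in the degenerate case one must additionally argue that $\VaR^{d}$ attains this minimum over $M_{0}$ — equivalently, that the pseudo-VaR realising the minimum does not effectively move at first order — or else simply invoke the customary non-degeneracy hypothesis that $\VaR^{d}$ is a strict quantile, under which $M_{0}=\{\VaR^{d}\}$ and the difficulty disappears. This reconciliation of the difference-formula route with the minimizer selection on $M_{0}$ is, I anticipate, the only nonroutine ingredient; everything else is elementary perturbation bookkeeping.
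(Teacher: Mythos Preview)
Your overall scaffold---divide \eqref{eq_diff_delta} by $\delta$, pass to the limit using $\pi^{\delta}\to\pi^{d}$, and show $\Delta_{\CVaR}(\delta,d)/\delta\to 0$---is precisely the paper's (the proof is in the Appendix). The divergence is in how claim~(ii) is argued. The paper does not go through the variational characterisation or Danskin; it expands $\Delta_{\CVaR}(\delta,d)$ algebraically from \eqref{eq_Delta} and \eqref{eq_CVaRf}, assumes without loss of generality $\VaR^{\delta}>\VaR^{d}$, and partitions state--action pairs into $\mathcal H^{+}=\{c(i,a)\ge\VaR^{\delta}\}$, $\mathcal H_{-}=\{c(i,a)\le\VaR^{d}\}$, and $\mathcal H^{+}_{-}=\{\VaR^{d}<c(i,a)<\VaR^{\delta}\}$. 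After invoking $\sum_{(i,a)\in\mathcal H^{+}}\pi^{\delta}(i,a)=1-\alpha$ to cancel the leading terms, only a sum over $\mathcal H^{+}_{-}$ survives; since $\mathcal H^{+}_{-}\to\emptyset$ and its summand tends to zero as $\delta\to 0$, the paper concludes $\partial\Delta_{\CVaR}/\partial\delta\big|_{\delta=0}=0$. Your minimiser-stability argument is tidier in the strict-quantile case (it yields $\Delta_{\CVaR}\equiv 0$ near $\delta=0$ outright), whereas the paper's computation exhibits the residual explicitly without ever tracking $\VaR^{\delta}$ as a minimiser.

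Your caution about the degenerate case is warranted, and the paper's route is not immune to it either: the cancellation identity $\sum_{(i,a)\in\mathcal H^{+}}\pi^{\delta}(i,a)=1-\alpha$ asserts that the steady-state cost cdf under $d^{\delta,d'}$ hits $\alpha$ exactly from the left at $\VaR^{\delta}$, which fails generically for a discrete cost distribution. So neither argument, read strictly, dispatches the knife-edge case; the non-degeneracy hypothesis you propose is the cleanest patch, and under it both routes go through without difficulty.
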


In the mixed policy space, we give the definition of local optimum
of the long-run CVaR MDP problem.
\begin{definition}\label{definition1}
A deterministic policy $d \in \mathcal D_0$ is local optimal, if
there exists $\epsilon >0$ such that we always have
$\CVaR^{d^{\delta,d'}} \geq \CVaR^{d}$ for any $\delta \in [0,
\epsilon]$ and $d' \in \mathcal D_0$.
\end{definition}

With Lemma~\ref{lemma4} and Definition~\ref{definition1}, we can
derive the following theorem about the local optimal policies.
\begin{theorem}\label{theorem5}
The policies $d^*$ determined by the optimality equation
\eqref{eq_BellmanEq1} or \eqref{eq_BellmanEq2} are local optima of
the long-run CVaR MDP problem in the mixed policy space.
\end{theorem}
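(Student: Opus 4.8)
The plan is to apply the long-run CVaR difference formula \eqref{eq_diff_delta} with the current policy $d=d^*$ and the ``new'' policy taken to be the mixed policy $d^{\delta,d'}$, and to show that its right-hand side is nonnegative for all sufficiently small $\delta>0$ and every $d'\in\mathcal D_0$. Set $Q_i:=\sum_{j\in\mathcal S}[p(j|i,d'(i))-p(j|i,d^*(i))]g^{d^*}(\VaR^{d^*},j)+\tilde c(\VaR^{d^*},i,d'(i))-\tilde c(\VaR^{d^*},i,d^*(i))$ for $i\in\mathcal S$. By the Bellman local optimality equation \eqref{eq_BellmanEq1}, $d^*(i)$ minimizes $\tilde c(\VaR^{d^*},i,a)+\sum_j p(j|i,a)g^{d^*}(\VaR^{d^*},j)$ over $a\in\mathcal A$; taking $a=d'(i)$ gives $Q_i\ge0$ for every $i$. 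Since the chain under $d^{\delta,d'}$ is ergodic, $\pi^\delta(i)>0$, so \eqref{eq_diff_delta} reads $\CVaR^{d^{\delta,d'}}-\CVaR^{d^*}=\delta\sum_{i}\pi^\delta(i)Q_i+\Delta_{\CVaR}(d^{\delta,d'},d^*)$, whose first term is $\ge0$ and whose second term is $\le0$ by \eqref{eq_Delta}. The entire issue is therefore to rule out that this nonpositive correction term overwhelms the nonnegative first term as $\delta\downarrow0$.

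The key step is to show that $\Delta_{\CVaR}(d^{\delta,d'},d^*)=0$ for all small $\delta$, i.e. that $\VaR^{d^*}$ remains a minimizer of $y\mapsto\widetilde{\CVaR}^{d^{\delta,d'}}(y)$. The function $\widetilde{\CVaR}^{d^{\delta,d'}}(\cdot)$ is convex and piecewise linear, with breakpoints contained in the fixed finite set $\mathbb C$, and the slope of each affine piece equals $1-\tfrac{1}{1-\alpha}\,\mathbb P(c(X_t,A_t)>\cdot)$ evaluated in steady state, hence depends continuously on $\pi^\delta$, and therefore on $\delta$, since $\pi^\delta\to\pi^{d^*}$ as $\delta\to0$. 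At $\delta=0$, $\widetilde{\CVaR}^{d^*}(\cdot)$ attains its minimum at $\VaR^{d^*}$; in the generic case this minimum occurs at a strict kink (left slope $<0<$ right slope), and continuity of the slopes in $\delta$ then forces the same kink, hence the same minimizer, to persist: $\VaR^{d^{\delta,d'}}=\VaR^{d^*}$, whence $\Delta_{\CVaR}(d^{\delta,d'},d^*)=\CVaR^{d^{\delta,d'}}-\widetilde{\CVaR}^{d^{\delta,d'}}(\VaR^{d^*})=0$. Combining with the previous paragraph, $\CVaR^{d^{\delta,d'}}-\CVaR^{d^*}=\delta\sum_i\pi^\delta(i)Q_i\ge0$ for all $\delta\in[0,\epsilon]$ and all $d'$, which is exactly Definition~\ref{definition1}. (Equivalently, \eqref{eq_BellmanEq2} makes $d^*$ optimal for the standard average-cost MDP $\langle\mathcal S,\mathcal A,\mathcal P,\tilde c(\VaR^{d^*})\rangle$ over all of $\mathcal D$, and once $\VaR^{d^{\delta,d'}}=\VaR^{d^*}$ the quantity $\CVaR^{d^{\delta,d'}}=\widetilde{\CVaR}^{d^{\delta,d'}}(\VaR^{d^*})$ is just the average cost of $d^{\delta,d'}$ in that standard MDP, hence $\ge$ its optimal value $\CVaR^{d^*}$.) Alternatively, using Lemma~\ref{lemma4}, $\partial\CVaR^{d^{\delta,d'}}/\partial\delta|_{\delta=0}=\sum_i\pi^{d^*}(i)Q_i\ge0$; when this derivative is strictly positive the local-minimum property is immediate, and when it vanishes the persistence argument closes the gap.

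The main obstacle is the degenerate case in which the steady-state cost distribution under $d^*$ satisfies $\mathbb P_{\pi^{d^*}}(c\le\VaR^{d^*})=\alpha$ exactly. Then the set of minimizers of $\widetilde{\CVaR}^{d^*}(\cdot)$ is a nondegenerate interval $[\VaR^{d^*},y_+]$, where $y_+$ is the next cost value attained under $d^*$, and $\VaR^{d^{\delta,d'}}$ may jump away from $\VaR^{d^*}$ to a neighbouring value as $\delta$ varies; consequently $\Delta_{\CVaR}(d^{\delta,d'},d^*)$ is genuinely of order $\delta$ with a possibly negative coefficient, and one must check that the sum of the two order-$\delta$ contributions is still $\ge0$. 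The clean way to do this should be to differentiate the Poisson equation \eqref{eq_g1} in $y$ on $[\VaR^{d^*},y_+]$, where $\tilde c(y,i,d^*(i))$ and $\widetilde{\CVaR}^{d^*}(y)$ are affine, to deduce that $g^{d^*}(y,\cdot)$ is affine there, and then to use this to propagate the Bellman inequality \eqref{eq_BellmanEq1} from $y=\VaR^{d^*}$ to every $y\in[\VaR^{d^*},y_+]$; that yields $\widetilde{\CVaR}^{d^{\delta,d'}}(y)\ge\CVaR^{d^*}$ for all $y$ and hence $\CVaR^{d^{\delta,d'}}=\min_y\widetilde{\CVaR}^{d^{\delta,d'}}(y)\ge\CVaR^{d^*}$. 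Pushing the optimality condition from the single point $\VaR^{d^*}$ to the whole minimizer interval is the technically delicate part of the argument; if one instead restricts attention to probability levels $\alpha$ for which this degeneracy does not arise, the generic argument above already suffices.
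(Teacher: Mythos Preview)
Your route differs from the paper's. The paper's proof is three lines: it invokes Lemma~\ref{lemma4}, reads off $\partial\CVaR^\delta/\partial\delta\big|_{\delta=0}=\sum_i\pi^{d^*}(i)Q_i\ge0$ from \eqref{eq_BellmanEq1}, and concludes by a first-order Taylor argument that $\CVaR^\delta$ cannot decrease near $\delta=0$. You instead work directly with the difference formula \eqref{eq_diff_delta} and try to kill the correction term $\Delta_{\CVaR}$ outright, by arguing that $\VaR^{d^*}$ persists as the minimizer of $y\mapsto\widetilde{\CVaR}^{d^{\delta,d'}}(y)$ for small $\delta$. In the strict-kink case this delivers the \emph{exact} inequality $\CVaR^{d^{\delta,d'}}\ge\CVaR^{d^*}$ for all small $\delta$, which is stronger than what a one-sided derivative yields and sidesteps the question of whether a nonnegative derivative alone suffices when it happens to vanish. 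Your ``alternative'' using Lemma~\ref{lemma4} is in fact exactly the paper's proof.

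The degenerate case you isolate ($\mathbb P_{\pi^{d^*}}(c\le\VaR^{d^*})=\alpha$ exactly, so the right slope of $\widetilde{\CVaR}^{d^*}$ vanishes at $\VaR^{d^*}$) is not settled in your proposal: the plan to propagate \eqref{eq_BellmanEq1} along the minimizer interval $[\VaR^{d^*},y_+]$ via affinity of $g^{d^*}(y,\cdot)$ in $y$ is plausible but not carried out, and your parenthetical standard-MDP observation helps only after you know $\VaR^{d^{\delta,d'}}=\VaR^{d^*}$, since in general $\CVaR^{d^{\delta,d'}}\le\widetilde{\CVaR}^{d^{\delta,d'}}(\VaR^{d^*})$ with the inequality pointing the wrong way. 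In fairness, the paper's argument has a parallel soft spot: the Taylor step is silent precisely when the derivative vanishes, and the Appendix proof of Lemma~\ref{lemma4} itself leans on nondegenerate assumptions (for instance the identity $\sum_{(i,a)\in\mathcal H^+}\pi^\delta(i,a)=1-\alpha$, which for a discrete cost distribution need not hold exactly). So your proposal is at least as rigorous as the paper's and more explicit about where the subtlety sits; but a fully airtight proof still requires closing the degenerate case.
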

\begin{proof}
Suppose the current policy $d^*$ satisfies the optimality equation
\eqref{eq_BellmanEq1}. We choose any new perturbed policy $d' \in
\mathcal D_0$ and study the derivative of the long-run CVaR with
respect to the mixed probability $\delta$ along with the
perturbation direction $(d^*,d')$:
\begin{eqnarray}\label{eq_49}
\frac{\partial \CVaR^{\delta}}{\partial \delta}\Bigg|_{\delta = 0}
&=& \sum_{i \in \mathcal S} \pi^{d^*}(i) \Bigg[\sum_{j \in \mathcal
S} [p(j|i,d'(i)) - p(j|i,d^*(i))] g^{d^*}(\VaR^{d^*},j) \nonumber\\
&& + \tilde{c}(\VaR^{d^*},i,d'(i)) - \tilde{c}(\VaR^{d^*},i,d^*(i))
\Bigg].
\end{eqnarray}
Since $d^*$ satisfies \eqref{eq_BellmanEq1}, we have
\begin{equation}
\tilde{c}(\VaR^{d^*},i,d^*(i)) + \sum_{j \in \mathcal
S}p(j|i,d^*(i)) g^{d^*}(\VaR^{d^*},j) \leq \tilde{c}(\VaR^{d^*},i,a)
+ \sum_{j \in \mathcal S}p(j|i,a) g^{d^*}(\VaR^{d^*},j), \quad
\forall a \in \mathcal A. \nonumber
\end{equation}
Substituting the above inequality into \eqref{eq_49}, we directly
have
\begin{equation}
\frac{\partial \CVaR^{\delta}}{\partial \delta}\Bigg|_{\delta = 0}
\geq 0, \nonumber
\end{equation}
where we use the fact that $\pi^{d^*}(i)$ is always positive. By
using the first order of the Taylor expansion of $\CVaR^{\delta}$
with respect to $\delta$, we observe that $\CVaR^{\delta}$ will
always increase along with perturbation direction $(d^*, d')$, for
any $d' \in \mathcal D_0$. Therefore, with
Definition~\ref{definition1}, we can see that the long-run CVaR at
policy $d^*$ is the local optimum in the mixed policy space.
\end{proof}

\noindent\textbf{Remark~3.} Theorem~\ref{theorem5} indicates that
the optimality equation \eqref{eq_BellmanEq1} or
\eqref{eq_BellmanEq2} is necessary and sufficient for local optimal
policies, while only necessary for global optimal policies.

With Theorems~\ref{theorem1}-\ref{theorem5}, we can develop an
iterative algorithm to find a local optimal policy, which satisfies
the Bellman local optimality equation \eqref{eq_BellmanEq1} or
\eqref{eq_BellmanEq2}. The iterative algorithm is based on the CVaR
difference formula \eqref{eq_diff_CVaR2}. Improved policies are
repeatedly generated based on Theorem~\ref{theorem1}. The details
are provided by Algorithm~\ref{algo1}.

\begin{algorithm}[htbp]
  \caption{An iterative algorithm to find local optimal policies of long-run CVaR.}\label{algo1}
  \begin{algorithmic}[1]

\State arbitrarily choose an initial policy $d^{(0)} \in \mathcal
D_0$ and set $l=0$;

\Repeat

\State \hspace{-0.8cm} policy evaluation: for the current policy
$d^{(l)}$, compute or estimate the values of $\VaR^{d^{(l)}}$ and
$g^{d^{(l)}}(\VaR^{d^{(l)}})$ based on their definitions
\eqref{eq_VaR1} and \eqref{eq_g0}, respectively;

\State \hspace{-0.8cm} policy improvement: generate a new policy
$d^{(l+1)}$ as follows:
\begin{equation}\label{eq_PIVaR}
d^{(l+1)}(i) := \argmin\limits_{a \in \mathcal A}\Bigg\{
\tilde{c}(\VaR^{d^{(l)}},i,a) + \sum_{j \in \mathcal S} p(j|i,a)
g^{d^{(l)}}(\VaR^{d^{(l)}},j) \Bigg\}, \quad i \in \mathcal S.
\end{equation}
keep $d^{(l+1)}(i) = d^{(l)}(i)$ if possible, to avoid policy
oscillations;

\State set $l := l+1$;

\Until{$d^{(l)} = d^{(l-1)}$}

\Return $d^{(l)}$.

\end{algorithmic}
\end{algorithm}

From the above algorithm procedure, we can see that
Algorithm~\ref{algo1} is similar to the classical policy iteration
algorithm in the traditional MDP theory: both use policy evaluation
and policy improvement. However, one main difference between them is
that the functional operator in \eqref{eq_PIVaR} is varied with the
instantaneous cost function $\tilde{c}(\VaR^{d^{(l)}},i,a)$. In the
policy evaluation step of Algorithm~\ref{algo1}, we have to evaluate
not only $g^{d^{(l)}}(\VaR^{d^{(l)}})$, but also $\VaR^{d^{(l)}}$,
both are related to the current policy $d^{(l)}$ and used to update
the operator in \eqref{eq_PIVaR}, while the corresponding operator
in classical policy iteration is not varied with the single period
cost function. Such difference makes the principle of classical
dynamic programming infeasible in our CVaR optimization problem.
Another difference is that Algorithm~\ref{algo1} only converges to
local optima, while the classical policy iteration converges to the
global optimum of a standard MDP. The convergence analysis of
Algorithm~\ref{algo1} is stated in the following theorem.

\begin{theorem}\label{theorem6}
Algorithm~\ref{algo1} converges to a local optimum of the long-run
CVaR minimization problem.
\end{theorem}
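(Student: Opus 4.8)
The plan is to follow the template of the classical policy-iteration convergence proof, with two adjustments forced by the CVaR structure: the extra term $\Delta_{\CVaR}$ appearing in the difference formula, and the explicit tie-breaking rule built into the algorithm. I would organize the argument into four steps: (a) the values $\CVaR^{d^{(l)}}$ are non-increasing in $l$; (b) they strictly decrease whenever the policy actually changes; (c) the loop terminates after finitely many iterations; (d) the terminal policy is a local optimum.

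For step (a), I would observe that the policy improvement step \eqref{eq_PIVaR} picks $d^{(l+1)}(i)$ to minimize $\tilde c(\VaR^{d^{(l)}},i,a)+\sum_{j\in\mathcal S} p(j|i,a)\,g^{d^{(l)}}(\VaR^{d^{(l)}},j)$ over $a\in\mathcal A$, so the pair $(d^{(l)},d^{(l+1)})$ automatically satisfies the hypothesis \eqref{eq_generateD} of Theorem~\ref{theorem1}; hence $\CVaR^{d^{(l+1)}}\le\CVaR^{d^{(l)}}$ without further work.

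Step (b) is the crux, and I would argue it by contradiction. Suppose $d^{(l+1)}\neq d^{(l)}$ yet $\CVaR^{d^{(l+1)}}=\CVaR^{d^{(l)}}$. Writing the difference formula \eqref{eq_diff_CVaR2} for $d=d^{(l)}$, $d'=d^{(l+1)}$, its right-hand side is $\sum_{i\in\mathcal S}\pi^{d^{(l+1)}}(i)\,B_i+\Delta_{\CVaR}(d^{(l+1)},d^{(l)})$, where $B_i$ is the per-state bracket; rearranging $B_i$ shows it equals the minimized value in \eqref{eq_PIVaR} minus the value at $d^{(l)}(i)$, so $B_i\le 0$ for every $i$, and $\Delta_{\CVaR}(d^{(l+1)},d^{(l)})\le 0$ by \eqref{eq_Delta}. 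For the sum to vanish, both pieces must vanish; since $\pi^{d^{(l+1)}}(i)>0$ by ergodicity, every $B_i=0$, which says precisely that $d^{(l)}(i)$ also attains the minimum in \eqref{eq_PIVaR} for every $i$. But then the rule ``keep $d^{(l+1)}(i)=d^{(l)}(i)$ if possible'' would have left the policy unchanged, contradicting $d^{(l+1)}\neq d^{(l)}$. Hence any change of policy forces $\CVaR^{d^{(l+1)}}<\CVaR^{d^{(l)}}$.

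Steps (c) and (d) are then routine. By (a)--(b), along any non-terminating run the values $\CVaR^{d^{(l)}}$ are strictly decreasing, so the deterministic policies $d^{(0)},d^{(1)},\dots$ are pairwise distinct; since $\mathcal D_0$ is finite the repeat-loop halts after at most $|\mathcal D_0|$ iterations, at some $d^{(L)}$ with $d^{(L)}=d^{(L-1)}$. Read off from \eqref{eq_PIVaR}, that equality is exactly the Bellman local optimality equation \eqref{eq_BellmanEq1} with $d^*=d^{(L)}$, so Theorem~\ref{theorem5} yields that $d^{(L)}$ is a local optimum in the mixed policy space. The only delicate point is the one in step (b): that ``CVaR constant across one iteration'' really collapses to ``the policy is already a fixed point of \eqref{eq_PIVaR}'', which uses both the sign of $\Delta_{\CVaR}$ and the translation of the equalities $B_i=0$ back into optimality of the current actions so that the tie-breaking rule applies; one should also note in passing that a minimizer in \eqref{eq_PIVaR} always exists because $\mathcal A$ is finite.
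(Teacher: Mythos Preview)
Your argument is correct and follows the same skeleton as the paper's proof: strict decrease of $\CVaR^{d^{(l)}}$ under \eqref{eq_PIVaR}, finiteness of $\mathcal D_0$ to force termination, and then Theorem~\ref{theorem5} to upgrade the fixed point to a local optimum. The paper compresses your steps (a)--(b) into a single line, asserting directly that the bracketed sum in \eqref{eq_diff_CVaR2} is strictly negative whenever the policy changes; you make explicit what that assertion tacitly relies on, namely the tie-breaking clause ``keep $d^{(l+1)}(i)=d^{(l)}(i)$ if possible'', which is exactly what rules out the degenerate case $B_i=0$ for all $i$ with $d^{(l+1)}\neq d^{(l)}$. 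So your version is a more carefully justified rendering of the same proof.
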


\begin{proof}
First, we prove the convergence of Algorithm~\ref{algo1}. By
substituting \eqref{eq_PIVaR} into \eqref{eq_diff_CVaR2}, we have
\begin{eqnarray}
\CVaR^{d^{(l+1)}} - \CVaR^{d^{(l)}} &=& \sum_{i \in \mathcal S}
\pi^{d^{(l+1)}}(i) \Bigg[\sum_{j \in \mathcal S}
[p(j|i,d^{(l+1)}(i)) -p(j|i,d^{(l)}(i))]
g^{d^{(l)}}(\VaR^{d^{(l)}},j)  \nonumber\\
&& + \tilde{c}(\VaR^{d^{(l)}},i,d^{(l+1)}(i)) -
\tilde{c}(\VaR^{d^{(l)}},i,d^{(l)}(i)) \Bigg] +
\Delta_{\CVaR}(d^{(l+1)},d^{(l)}) \nonumber\\
&<& 0 + \Delta_{\CVaR}(d^{(l+1)},d^{(l)}) \leq 0. \nonumber
\end{eqnarray}
Therefore, the long-run CVaR of newly generated policy by
\eqref{eq_PIVaR} is strictly reduced. Since the deterministic policy
space $\mathcal D_0$ is finite, Algorithm~\ref{algo1} will terminate
after a finite number of iterations. Thus, the convergence is
proved.

Next, we prove the local optimum of the convergence. When
Algorithm~\ref{algo1} stops, from the stopping rule $d^{(l)} =
d^{(l-1)}$, we can see that the converged policy $d^{(l)}$ must
satisfy the optimality equation \eqref{eq_BellmanEq1}. With
Theorem~\ref{theorem5}, the converged policy $d^{(l)}$ is a local
optimum in the mixed policy space. Thus, the theorem is proved.
\end{proof}

Although Algorithm~\ref{algo1} only converges to local optima, we
can further integrate other optimization techniques to improve its
global optimality. For example, we can choose different initial
policies such that Algorithm~\ref{algo1} may converge to different
local optima and select the best one. Other exploration techniques,
such as using a small perturbation probability away from the
improved direction or combining with evolutionary algorithms, may be
utilized. Another important topic is to efficiently estimate the key
quantities $\VaR^{d^{(l)}}$ and $g^{d^{(l)}}(\VaR^{d^{(l)}})$ based
on the system sample path under the current policy $d^{(l)}$, which
makes Algorithm~\ref{algo1} policy learning capable in the framework
of reinforcement learning \citep{Sutton18,Zhou22}.

\section{Extensions}\label{section_extension}
In this section, we discuss possible extensions of the main results
presented in Section~\ref{section_result}, which are intended as
future research topics.

\subsection{Long-Run Mean-CVaR Optimization}\label{subsection_meanCVaR}
Section~\ref{section_result} aims at minimizing the long-run CVaR of
MDPs, where the CVaR is used to measure the procedure risk. In
practice, decision makers usually optimize the return and the risk
together. Similar to the mean-variance optimization proposed by
\cite{Markowitz52}, we further study the mean-CVaR optimization in
the scenario of long-run MDPs. All the results in
Section~\ref{section_result} can be extended to the long-run
combined metric of mean and CVaR.

First, we define a combined cost function considering both the
pseudo CVaR and mean costs as below.
\begin{equation}
f_{\beta}(y,i,a) := y + \frac{1}{1-\alpha}[c(i,a) - y]^+ + \beta
c(i,a), \quad i \in \mathcal S, a \in \mathcal A, y \in \mathbb R,
\end{equation}
where $\beta \geq 0$ is a coefficient to balance the weights between
the pseudo CVaR and mean costs. Therefore, the long-run average
performance of the MDP under cost function $f_{\beta}(y)$ and policy
$d$ is
\begin{equation}
\eta^{d}(f_{\beta}(y)) = \sum_{i\in \mathcal S, a\in \mathcal A}
\pi^d(i,a)f_{\beta}(y,i,a) = \widetilde{\CVaR}^{d}(y) + \beta
\eta^{d}(c), \nonumber
\end{equation}
where $\widetilde{\CVaR}^{d}(y)$ is the pseudo CVaR defined in
\eqref{eq_pif} and $\eta^{d}(c) := \sum_{i\in \mathcal S, a\in
\mathcal A} \pi^d(i,a)c(i,a)$ is the mean cost of the MDP.
Therefore, the long-run mean-CVaR optimization problem of MDPs is
defined as below.
\begin{equation}\label{eq_mean-CVaRoptm}
\begin{array}{rcl}
d^*_{\beta} &=& \argmin\limits_{d \in \mathcal D}\{ \CVaR^d
+ \beta \eta^{d}(c)\},\\
\eta^*_{\beta} &=& \min\limits_{d \in \mathcal D}\{ \CVaR^d
+ \beta \eta^{d}(c)\} = \CVaR^{d^*_{\beta}} + \beta \eta^{d^*_{\beta}}(c),\\
\end{array}
\end{equation}
which is equivalent to the following bilevel optimization problem
\begin{eqnarray}\label{eq_mean-CVaRoptm2}
\eta^*_{\beta} &=& \min_{d \in \mathcal D} \{ \eta^{d}(f_{\beta}(y))
|_{y = \VaR^d}\} = \min_{y \in \mathbb R}\min_{d \in
\mathcal D} \{ \eta^{d}(f_{\beta}(y)) \} \nonumber\\
&=& \min_{y \in \mathbb R}\min_{d \in \mathcal D} \sum_{i \in
\mathcal S, a \in \mathcal A} \pi^{d}(i,a) f_{\beta}(y,i,a) .
\end{eqnarray}
Note that $\eta^d(-c)$ can be viewed as the average return,
$\min\limits_{d \in \mathcal D}\{ \CVaR^d + \beta \eta^{d}(c)\}$ in
\eqref{eq_mean-CVaRoptm} is equivalent to $\max\limits_{d \in
\mathcal D}\{ \beta \eta^{d}(-c) - \CVaR^d \}$ which indicates the
maximization of the mean return minus the CVaR loss risk.

\noindent\textbf{Remark~4.} Comparing the mean-CVaR optimization
problem \eqref{eq_mean-CVaRoptm2} and the CVaR minimization problem
\eqref{eq_2level}, these two problems are the same except that their
cost functions $f_{\beta}(y,i,a)$ and $\tilde{c}(y,i,a)$ are
different. The main results in Section~\ref{section_result}, such as
Theorems~\ref{theorem3}-\ref{theorem6},
Lemmas~\ref{lemma2}\&\ref{lemma4}, and Algorithm~1, are also valid
for this mean-CVaR optimization problem except that we have to
replace $\tilde{c}(y,i,a)$'s with $f_{\beta}(y,i,a)$'s and also the
associated potential functions, where we usually set $y=\VaR^d$.
More specifically, the potential function \eqref{eq_g0} in
Section~\ref{section_result} should be
\begin{equation}\label{eq_gCVaRbeta}
g^{d}_{\beta}(y,i) := \lim\limits_{T \rightarrow \infty} \mathbb E^d
\left\{ \sum_{t=0}^{T} [f_{\beta}(y, X_t, A_t) -
\widetilde{\CVaR}^{d}(y) - \beta \eta^d(c)]\Big|X_0=i \right\},
\qquad i \in \mathcal S.
\end{equation}

\subsection{Long-Run CVaR Maximization Problem}
In the previous studies, we aim at minimizing the long-run CVaR of
MDPs as a means of enforcing risk aversion. If the decision-maker is
risk seeking or we focus on extreme returns instead of losses, we
are led to study the CVaR maximization problem defined below.
\begin{equation}\label{eq_CVaRoptmax}
\begin{array}{rcl}
\overline{d}^* &=& \argmax\limits_{d \in \mathcal D}\{ \CVaR^d \},\\
\overline{\CVaR}^* &=& \max\limits_{d \in \mathcal D}\{
\CVaR^d \} = \CVaR^{\overline{d}^*}.\\
\end{array}
\end{equation}
Interestingly, we find that the CVaR maximization problem can be
converted to a maximin problem
\begin{eqnarray}\label{eq_minimax0}
\overline{\CVaR}^* &=& \max_{d \in \mathcal D} \min_{y \in \mathbb
R} \sum_{i \in \mathcal S, a \in \mathcal A} \pi^{d}(i,a)\left\{ y +
\frac{1}{1-\alpha} [c(i,a)-y]^+ \right\}.
\end{eqnarray}
Since $c(i,a)$ has a finite value set, we can further specify the
value domain $y \in \mathbb R$ to a smaller set $y \in [\underline
c, \overline c]$, where $\underline c$ and $\overline c$ is the
minimum and maximum of $c(i,a)$'s, respectively. Furthermore, by the
linear programming model \eqref{eq_2levelLP} for a long-run average
MDP, we define $\mathbb X$ as the feasible domain of variables
$x(i,a)$'s, which is determined by the linear constraints in
\eqref{eq_2levelLP}. Therefore, long-run CVaR maximization can be
rewritten as the following optimization problem
\begin{equation}\label{eq_minimax1}
\overline{\CVaR}^* = \max_{x \in \mathbb X} \min_{y \in [\underline
c, \overline c]} \sum_{i \in \mathcal S, a \in \mathcal A}
x(i,a)\left\{ y + \frac{1}{1-\alpha} [c(i,a)-y]^+ \right\}.
\end{equation}
We define
\begin{equation}
f(x,y):=\sum_{i \in \mathcal S, a \in \mathcal A} x(i,a)\left\{ y +
\frac{1}{1-\alpha} [c(i,a)-y]^+ \right\}. \nonumber
\end{equation}
It is easy to verify that $f(x,y)$ is concave (actually linear) in
$x$ and convex in $y$ (refer to \citep{Rockafellar02}). Moreover, it
is easy to verify that $\mathbb X$ and $[\underline c, \overline c]$
are both compact convex sets. Therefore, the \emph{von Neumann's
minimax theorem} \citep{VonNeumann28} establishes that the operators
max and min in \eqref{eq_minimax1} are interchangeable, i.e.,
\begin{equation}
\overline{\CVaR}^* = \max_{x \in \mathbb X} \min_{y \in [\underline
c, \overline c]} f(x,y) = \min_{y \in [\underline c, \overline c]}
\max_{x \in \mathbb X}  f(x,y). \nonumber
\end{equation}
We directly derive the following lemma.
\begin{lemma}
The von Neumann's minimax theorem is applicable to the long-run CVaR
maximization problem \eqref{eq_minimax1} whose solution is the
saddle point of the concave-convex function $f(x,y)$.
\end{lemma}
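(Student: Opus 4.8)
The plan is simply to check, item by item, the hypotheses under which von Neumann's minimax theorem (in its concave--convex form) guarantees both the interchange of $\max_x$ and $\min_y$ in \eqref{eq_minimax1} and the existence of a saddle point, and then to read the saddle-point characterization directly off the theorem's conclusion. Most of what is needed has already been asserted in the surrounding text; the proof is a short verification that ties those remarks together.

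First I would pin down the two feasible sets. The set $\mathbb{X}$ is the subset of $\mathbb{R}^{SA}$ cut out by the balance equalities, the normalization $\sum_{i,a} x(i,a)=1$, and the nonnegativity constraints $x(i,a)\geq 0$ in \eqref{eq_2levelLP}, so it is a convex polyhedron; since $0\le x(i,a)\le 1$ for every state--action pair, it is also bounded, hence a nonempty compact convex polytope. The interval $[\underline c,\overline c]$ is trivially compact and convex. I would also recall the reduction of the $y$-domain: writing $f(x,y)=\sum_{i,a}x(i,a)\big(y+\tfrac{1}{1-\alpha}[c(i,a)-y]^+\big)$, for $y<\underline c$ one has $f(x,y)= -\tfrac{\alpha}{1-\alpha}\,y+\tfrac{1}{1-\alpha}\sum_{i,a}x(i,a)c(i,a)$, which is decreasing in $y$, while for $y>\overline c$ all positive parts vanish and $f(x,y)=y$, which is increasing; hence $\min_{y\in\mathbb R}f(x,y)=\min_{y\in[\underline c,\overline c]}f(x,y)$ for every $x$, which is exactly why \eqref{eq_minimax0} may be replaced by \eqref{eq_minimax1}.

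Next I would verify the concave--convex structure, which is essentially routine. For each fixed $y$, $x\mapsto f(x,y)=\sum_{i,a}x(i,a)\,\tilde c(y,i,a)$ is affine, hence concave. For each fixed $x\in\mathbb{X}$, so $x(i,a)\ge 0$, the map $y\mapsto f(x,y)$ is a nonnegative linear combination of the functions $y\mapsto y+\tfrac{1}{1-\alpha}[c(i,a)-y]^+$, each of which is convex (the linear term $y$ plus the nonnegative multiple $\tfrac{1}{1-\alpha}$ of the convex function $y\mapsto[c(i,a)-y]^+$); a nonnegative combination of convex functions is convex, so $f(x,\cdot)$ is convex, and it is plainly continuous. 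Combined with the compactness and convexity of $\mathbb{X}$ and $[\underline c,\overline c]$, these are precisely the hypotheses of the minimax theorem, which therefore gives $\max_{x\in\mathbb X}\min_{y\in[\underline c,\overline c]}f(x,y)=\min_{y\in[\underline c,\overline c]}\max_{x\in\mathbb X}f(x,y)$.

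Finally, compactness of both domains together with the continuity of $f$ forces the outer $\max$ and the outer $\min$ to be attained, say at $x^*$ and $y^*$; the common value equals $\overline{\CVaR}^*$ by \eqref{eq_minimax1}, and standard saddle-point theory then yields $f(x,y^*)\le f(x^*,y^*)\le f(x^*,y)$ for all $x\in\mathbb X$ and $y\in[\underline c,\overline c]$, i.e.\ $(x^*,y^*)$ is a saddle point of $f$. As in Theorem~\ref{theorem3}, a maximizing occupation measure $x^*$ can be decoded into a stationary policy attaining $\overline{\CVaR}^*$. I do not expect a genuine obstacle here: the only points deserving a moment's care are confirming that $\mathbb{X}$ is bounded (so a maximizer exists at all) and that truncating $y$ to $[\underline c,\overline c]$ is lossless, and noting that because $f$ is only affine in $x$ and only convex --- not strictly convex --- in $y$, the saddle point need not be unique, though existence, which is all the lemma asserts, is unaffected.
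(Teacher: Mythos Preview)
Your proposal is correct and follows essentially the same approach as the paper: verify that $\mathbb X$ and $[\underline c,\overline c]$ are compact convex sets and that $f(x,y)$ is concave (indeed linear) in $x$ and convex in $y$, then invoke von Neumann's minimax theorem. The paper merely asserts these verifications as ``easy to verify'' and states the lemma as a direct consequence, whereas you have filled in the details (including the justification for truncating the $y$-domain) explicitly.
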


\noindent\textbf{Remark~5.} Since the minimax theorem is widely used
in game theory, we may analogously use the algorithms of game theory
to solve this long-run CVaR maximization problem. For example, the
linear programming method for solving matrix games \citep{Barron08}
and the minimax-Q learning algorithm \citep{Littman94} for solving
zero-sum stochastic games may be utilized to solve
\eqref{eq_minimax1}, which is an interesting algorithmic topic
deserving further investigations.

%


\section{Numerical Experiments}\label{section_experiment}
In this section, we use a simplified portfolio management problem to
demonstrate the main results of this paper.

\subsection{CVaR Optimization of System Costs}
We consider a simplified financial market, where the initial total
wealth of an investor is $\$10,000$. For simplicity, we consider a
portfolio which consists of a risky asset and a riskless asset. The
riskless asset has a fixed rate of daily return as $r_f = 0.01\%$.
It is common to assume different market conditions, such as bull
(good) or bear (bad) market. We denote the market condition at time
$t$ as $e_t$. The market condition transition is characterized by a
transition probability $p(e_t,e_{t+1})$, which is endogenous and
independent of the investor's action. In this paper, we assume that
the market has 10 different conditions and the transition
probability matrix is given in Table~\ref{tab P}.

\begin{table}[htbp]
\small \centering
\begin{tabular}{c|cccccccccc}
\toprule \backslashbox{$e_t$}{$e_{t+1}$} & $0$  & $1$  & $2$  & $3$
& $4$  & $5$  & $6$  & $7$  & $8$  & $9$  \\ \midrule $0$  & 0.20 &
0.13 & 0.19 & 0.09 & 0.12 & 0.06 & 0.12 & 0.04 & 0.04 & 0.01\\
\hline $1$  & 0.18 & 0.15 & 0.15 & 0.09 & 0.08 & 0.15 & 0.06 & 0.07
& 0.04 & 0.03 \\ \hline $2$  & 0.13 & 0.09 & 0.12 & 0.22 & 0.14 &
0.14 & 0.04 & 0.03 & 0.07 & 0.02 \\ \hline $3$  & 0.11 & 0.10 & 0.13
& 0.12 & 0.11 & 0.15 & 0.07 & 0.08 & 0.07 & 0.06 \\ \hline $4$  &
0.07 & 0.14 & 0.15 & 0.10 & 0.13 & 0.11 & 0.11 & 0.05 & 0.07 & 0.07
\\ \hline $5$  & 0.07 &
0.09 & 0.08 & 0.06 & 0.06 & 0.18 & 0.14 & 0.14 & 0.07 & 0.11 \\
\hline $6$  & 0.08 & 0.05 & 0.13 & 0.16 & 0.11 & 0.10 & 0.11 & 0.07
& 0.09 & 0.10 \\ \hline $7$  & 0.09 & 0.06 & 0.08 & 0.16 & 0.10 &
0.07 & 0.11  & 0.13 & 0.08 & 0.12 \\ \hline $8$  & 0.07 & 0.09 &
0.07 & 0.08 & 0.13 & 0.08 & 0.12 & 0.09 & 0.13 & 0.14 \\ \hline $9$
& 0.01 & 0.15 & 0.11 & 0.08 & 0.04 & 0.15 & 0.10 & 0.11 & 0.03 &
0.22
\\ \bottomrule
\end{tabular}
\caption{The transition probability matrix of market
conditions.}\label{tab P}
\end{table}

At each time epoch $t$, the investor has to choose an action $a_t$
which determines the percentage of the risky asset at the next time
epoch. We denote $w_t$ as the percentage of the risky asset at time
$t$. Thus, we always have $w_{t+1} = a_t$. The set of possible
values of $a_t$ is $\mathcal A = \{0.1, 0.25, 0.4, 0.55, 0.7,
0.85\}$. The system state $s_t$ is composed of the market condition
$e_t$ and the percentage of the risky asset $w_t$, i.e., $s_t :=
(e_t,w_t)$. Thus, the size of the state space is $|\mathcal S| =
60$.  Under different market conditions, the risky asset has
different return rates $r_{risky}$, as shown in Table~\ref{tab R}. A
negative value of $r_{risky}$ means that there will be losses if the
investor holds the risky asset at the corresponding market
condition. If the investor chooses $a_t$ at state $s_t$, the system
will transit to state $s_{t+1}$ at the next time epoch, and a reward
$r(s_t,a_t,s_{t+1})$ will be incurred. We also consider a
transaction cost rate which is set as $b=0.45\%$. Hence, the value
of $r(s_t,a_t,s_{t+1})$ can be calculated as below.
\begin{equation}
r(s_t,a_t,s_{t+1}) = r((e_t,w_t),a_t,(e_{t+1},w_{t+1})) =
[r_{risky}(e_{t+1})  a_t - b |a_t - w_t| + r_f (1 - a_t)]\times
10^4. \nonumber
\end{equation}
For simplicity of calculation, we always reset the asset amount as
$10^4$ at each time epoch. We can further rewrite the above
instantaneous reward as $r(s_t,a_t)$ by using state transition
probabilities.
\begin{equation}
r(s_t,a_t) = \sum_{e_{t+1}=0}^{9} r(s_t,a_t,s_{t+1})p(e_t, e_{t+1}).
\nonumber
\end{equation}

\begin{table}[htbp]
\small \centering
\begin{tabular}{c|cccccccccc}
\toprule $e_t$  & 0 & 1 & 2 & 3 & 4 & 5 & 6 & 7 & 8 & 9\\
\toprule $r_{risky}$  & 0.09 & 0.08 & 0.06 & 0.05 & 0.04 & 0.03 & 0.02 & -0.001 & -0.002 & -0.05\\
\bottomrule
\end{tabular}
\caption{The return rate of the risky asset under different market
conditions.}\label{tab R}
\end{table}

For convenience, we transform the reward into cost as $c_t=-r_t$.
The probability level of CVaR is set as $\alpha = 0.66$. The
illustrative diagram of this portfolio management problem is shown
in Figure~\ref{fig_illustrative}. We first use Algorithm~1 to
minimize the long-run $\CVaR$ of this portfolio management
problem. 

\begin{figure}[htbp]
\centering
\includegraphics[width=.6\columnwidth]{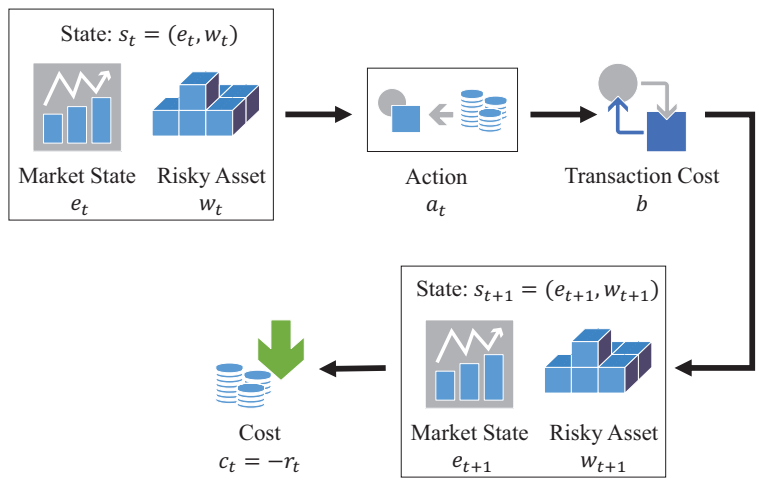}
\caption{Illustration of the portfolio management
problem.}\label{fig_illustrative}
\end{figure}

\begin{figure}[htbp]

\subfigure[The CVaR global optimum converged]{
\begin{minipage}{0.5\textwidth}
\centering
\includegraphics[width=0.9\columnwidth]{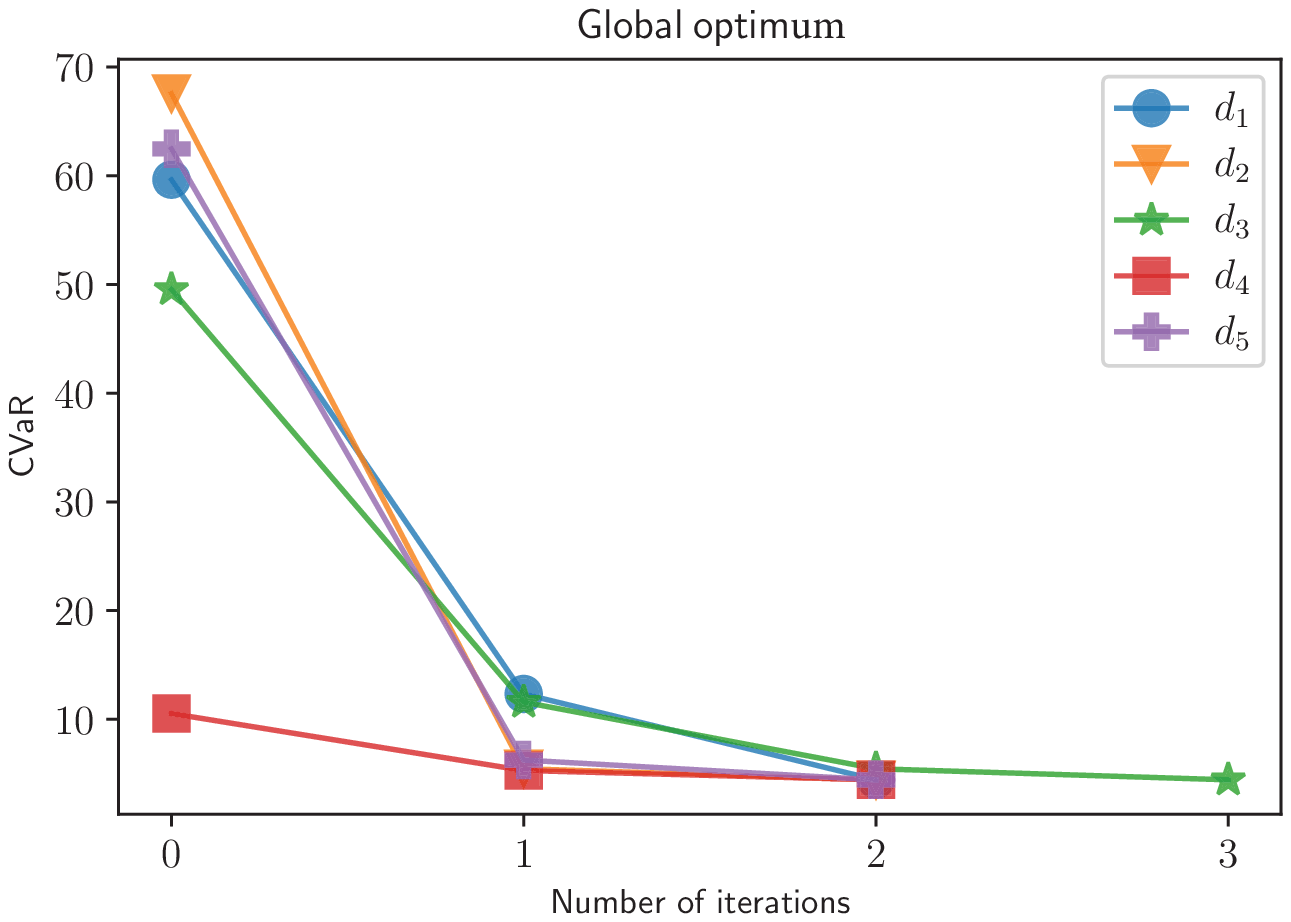}\label{subfig_CVaR_global}
\end{minipage}
} \hspace{-.3in} \subfigure[The CVaR local optimum converged]{
\begin{minipage}{0.5\textwidth}
\centering
\includegraphics[width=0.9\columnwidth]{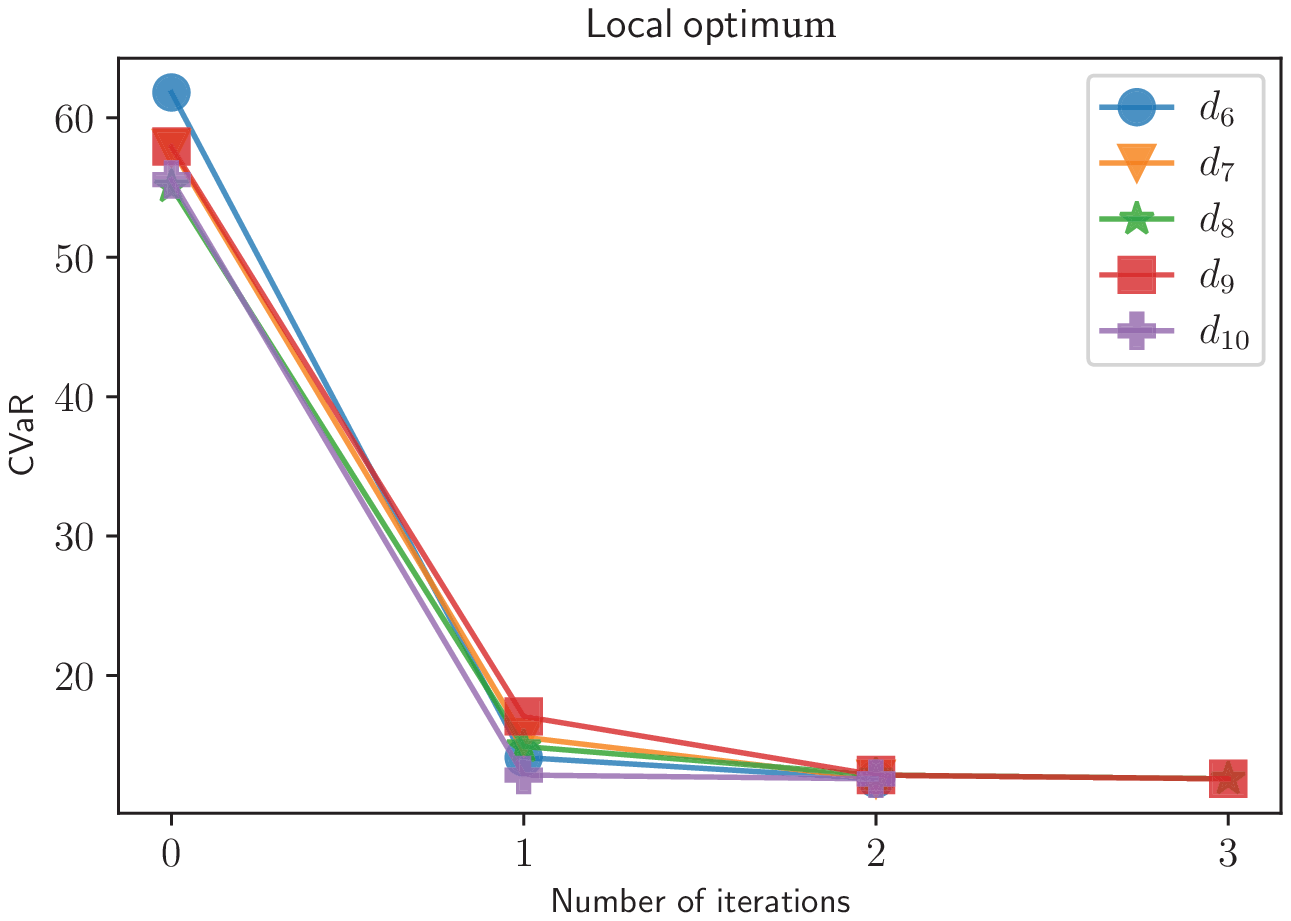}\label{subfig_CVaR_local}
\end{minipage}
}

\caption{The convergence procedure of Algorithm~\ref{algo1} under
different initial policies. The last points on the curves are the
convergence points.} \label{fig_CVaR_converge}
\end{figure}

We randomly select initial policies from the policy space and show
the experimental result as example. From
Figure~\ref{fig_CVaR_converge}, we observe that the value of CVaR
decreases strictly in the iteration process until it converges. With
different initial policies randomly chosen from the policy space,
CVaR always converges to one of the two local optima, as shown in
Figures~\ref{fig_CVaR_converge}\subref{subfig_CVaR_global}\&\subref{subfig_CVaR_local}.
For example, with initial policy $d_1$, the algorithm converges to
the global optimum at the second iteration, and the associated CVaR
is $4.43$. With initial policy $d_6$, the algorithm converges to the
local optimum at the second iteration, and the associated CVaR is
$12.58$. Given different initial policies, Algorithm~\ref{algo1}
converges within two or three iterations in most cases, which
demonstrates its fast convergence speed.

Moreover, we compare the distributions of system losses under the
CVaR global optimal policy, the CVaR local optimal policy, and a
random investment policy, which are illustrated with different
colored histograms in Figure~\ref{fig_pdf}. The policy matrix of the
CVaR global and local optimum is shown in
Figure~\ref{fig_policy_matrix}\subref{fig_policy_matrix_global}\&\subref{fig_policy_matrix_local},
respectively. The random investment policy refers to a naive one
choosing actions randomly at each state, which is illustrated by the
policy matrix in
Figure~\ref{fig_policy_matrix}\subref{fig_policy_matrix_random}.
Besides, we compute the long-run average optimal policy under which
the mean of $c_t$ attains minimum, whose policy matrix is
illustrated by
Figure~\ref{fig_policy_matrix}\subref{fig_policy_matrix_mean}. From
Figure~\ref{fig_pdf}, we can observe that the loss distribution
under the CVaR global optimal policy has the minimal expected tail
losses. The distribution of tail losses under the CVaR local optimal
policy is also thinner than that of the random investment policy. In
fact, the distribution of the CVaR global optimal policy is thinner
than that of other policies. With the CVaR global policy shown in
Figure~\ref{fig_policy_matrix}\subref{fig_policy_matrix_global}, the
investor should hold as few risky assets as possible in bear market
and hold a little more risky assets in bull market. The mean optimal
policy ignores the risk completely, and suggests the investor to
hold as more risky assets as possible in any market state, even if
there may be extreme losses. The performance comparison of mean,
standard deviation, and CVaR under different policies are summarized
in Table~\ref{tab num_result}.

\begin{figure}[htbp]
\centering
\includegraphics[width=0.6\columnwidth]{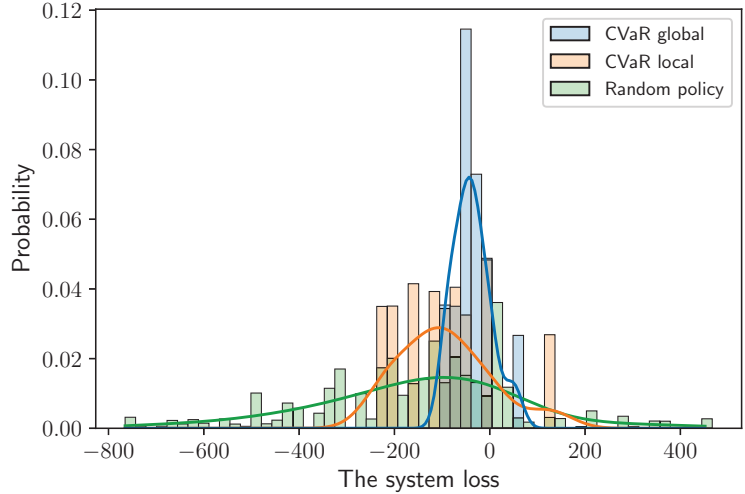}
\caption{The distributions and fitting curves of the system losses
under different policies.}\label{fig_pdf}
\end{figure}

\begin{figure*}[htbp]

\subfigure[The CVaR global optimal policy matrix]{
\begin{minipage}{0.5\textwidth}
\centering
\includegraphics[width=0.9\columnwidth]{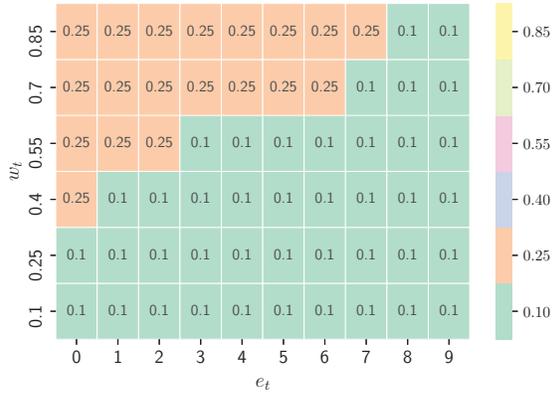}\label{fig_policy_matrix_global}
\end{minipage}
} \hspace{-.3in} \subfigure[The CVaR local optimal policy matrix]{
\begin{minipage}{0.5\textwidth}
\centering
\includegraphics[width=0.9\columnwidth]{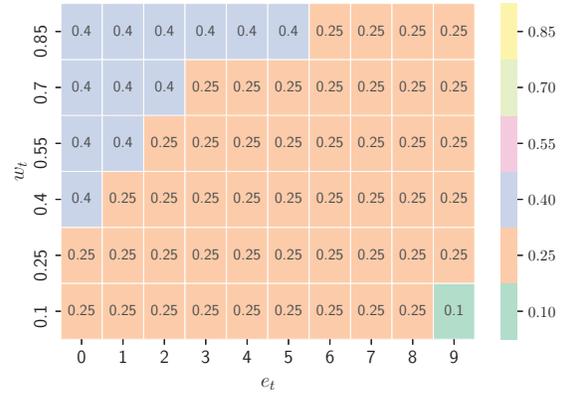}\label{fig_policy_matrix_local}
\end{minipage}
}

\subfigure[The random policy matrix]{
\begin{minipage}{0.5\textwidth}
\centering
\includegraphics[width=0.9\columnwidth]{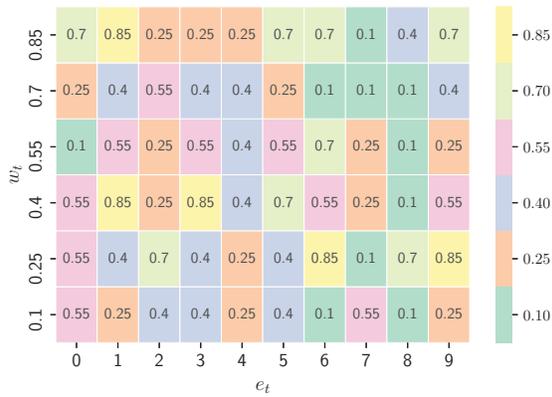}\label{fig_policy_matrix_random}
\end{minipage}
} \hspace{-.3in} \subfigure[The mean optimal policy matrix]{
\begin{minipage}{0.5\textwidth}
\centering
\includegraphics[width=0.9\columnwidth]{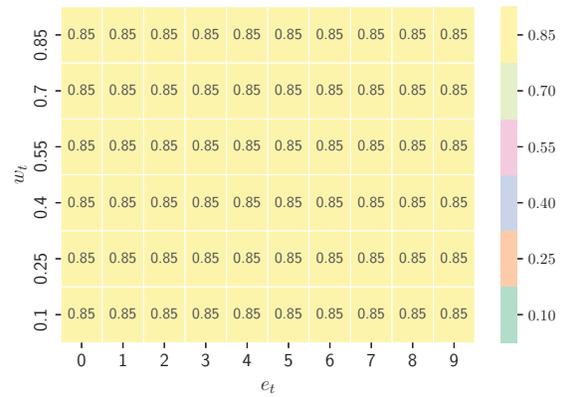}\label{fig_policy_matrix_mean}
\end{minipage}
} \caption{The policy matrices, where the horizontal axis represents
the market condition, the vertical axis represents the percentage of
the risky asset, and the number and color in the matrix represent
the value of action $a_t \in \{0.1, 0.25, 0.4, 0.55, 0.7, 0.85\}$.}
\label{fig_policy_matrix}
\end{figure*}

\begin{table}[htbp]
\small \centering
\begin{tabular}{c|ccc}
\toprule Policy  & $\eta(c_t)$ & $\sigma(c_t)$ & $\CVaR(c_t)$\\
\midrule The CVaR global optimal policy & -37.55 & 37.91 & 4.43\\
\hline   The CVaR local optimal policy & -92.37 & 94.77 & 12.58\\
\hline   The random policy  & -150.41 & 205.21 & 50.38\\
\hline   The mean optimal policy  & -311.65 & 322.20 & 45.17\\
\bottomrule
\end{tabular}
\caption{Performance comparisons of different policies.}\label{tab
num_result}
\end{table}

From the experiment results, we can see that our Algorithm~1 can
optimize the CVaR metric effectively. The optimization algorithm has
a fast convergence speed, which is analogous to the classical policy
iteration algorithm. The local convergence of Algorithm~\ref{algo1}
is also demonstrated. 

\newpage

\subsection{Mean-CVaR Optimization of System Costs}
In financial market, investors often need to control both the risk
and the mean of losses in order to obtain stable returns. Thus, we
conduct an experiment to demonstrate the mean-CVaR optimization
discussed in Section~\ref{subsection_meanCVaR}. The optimization
objective is $\CVaR(c_t) + \beta \eta(c_t)$. The probability level
of $\CVaR$ is set as $\alpha = 0.75$. All the other parameters are
the same as those in the previous experiment. We first set
$\beta=0.22$ and adapt Algorithm~\ref{algo1} to optimize the
mean-CVaR combined objective. The convergence procedure is
illustrated in Figure~\ref{fig_metric}, where we can observe that
the combined objective decreases strictly during the procedure and
converges to $3.38$ at the third iteration. 

\begin{figure}[htbp]
\centering
\includegraphics[width=0.6\columnwidth]{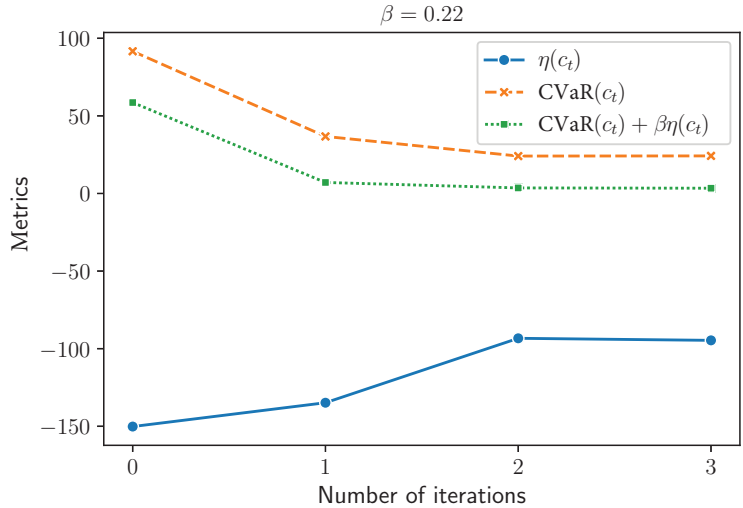}
\caption{The convergence procedure of mean-CVaR optimization in
portfolio management.}\label{fig_metric}
\end{figure}

We also investigate the optimization results under different initial
policies and different values of $\beta$. The convergence procedure
is illustrated in Figure~\ref{fig_mean_cvar}. For different values
of $\beta$, we randomly choose 5 different initial policies to
execute Algorithm~\ref{algo1}. The convergence procedures under
different initial policies are distinguished by different colored
curves. There are two local optima when $\beta = 0.4$, as shown in
Figure~\ref{fig_mean_cvar}\subref{fig_mean_cvar0.4}. In other cases,
different initial policies always converge to a single local
optimum. The value of $\beta$ reflects the risk attitude of
investors. For example, when $\beta = 0.1$, it means that the
investor is more risk averse and prefers to sacrifice potential
returns to obtain a conservative investment policy. In contrast,
when $\beta=2$, it indicates that the investor is more risk seeking
and pursues more returns by buying more risky assets. Note that
minimizing $\eta(c_t)$ is equivalent to maximizing investment
returns in this experiment. The corresponding optimal policies under
different values of $\beta$ are shown in
Figure~\ref{fig_beta_policy_matrix}, where we observe that the
optimal policy becomes more risk seeking when the value of $\beta$
increases. Performance comparisons of different mean-CVaR optimal
policies are summarized in Table~\ref{tab num_result2}. Numerical
results demonstrate that Algorithm~\ref{algo1} can be extended to
optimize both the mean and CVaR simultaneously, which is a common
application scenario in finance.

\begin{figure*}[htbp]

\subfigure[]{
\begin{minipage}{0.6\textwidth}
\includegraphics[width=0.8\columnwidth]{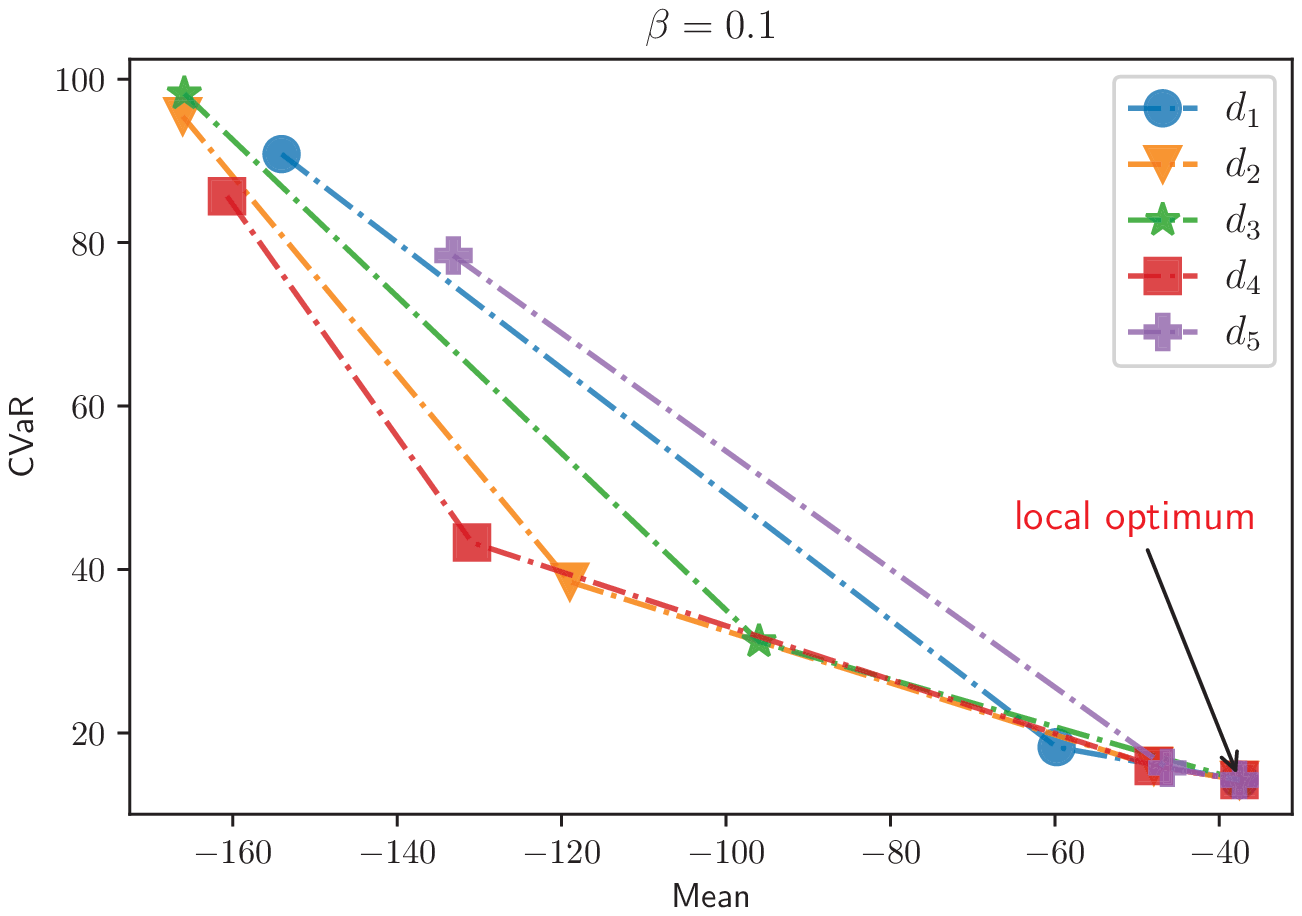}\label{fig_mean_cvar0.1}
\end{minipage}
} \hspace{-.9in} \subfigure[]{
\begin{minipage}{0.6\textwidth}
\includegraphics[width=0.8\columnwidth]{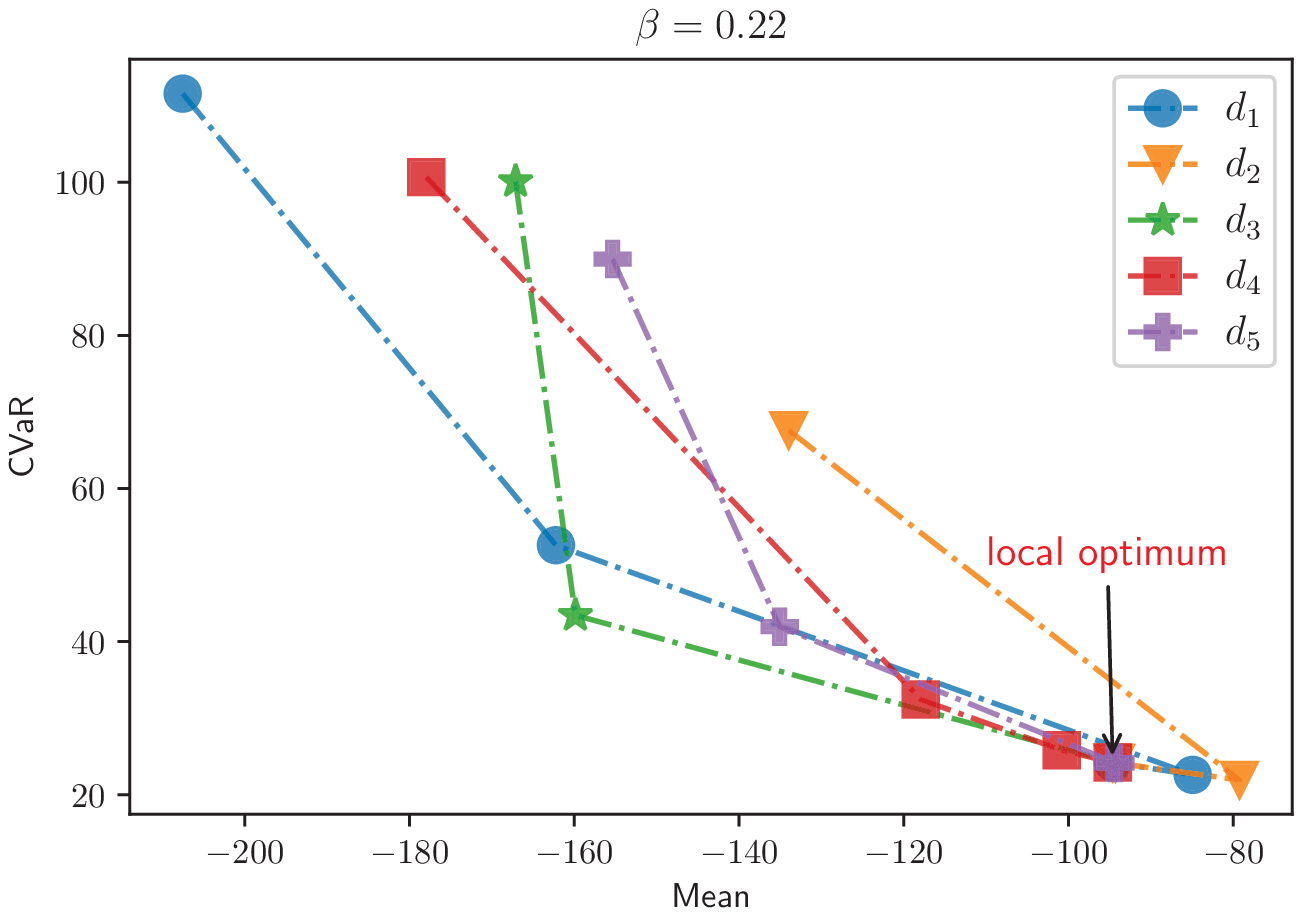}\label{fig_mean_cvar0.22}
\end{minipage}
}

\subfigure[]{
\begin{minipage}{0.6\textwidth}
\includegraphics[width=0.8\columnwidth]{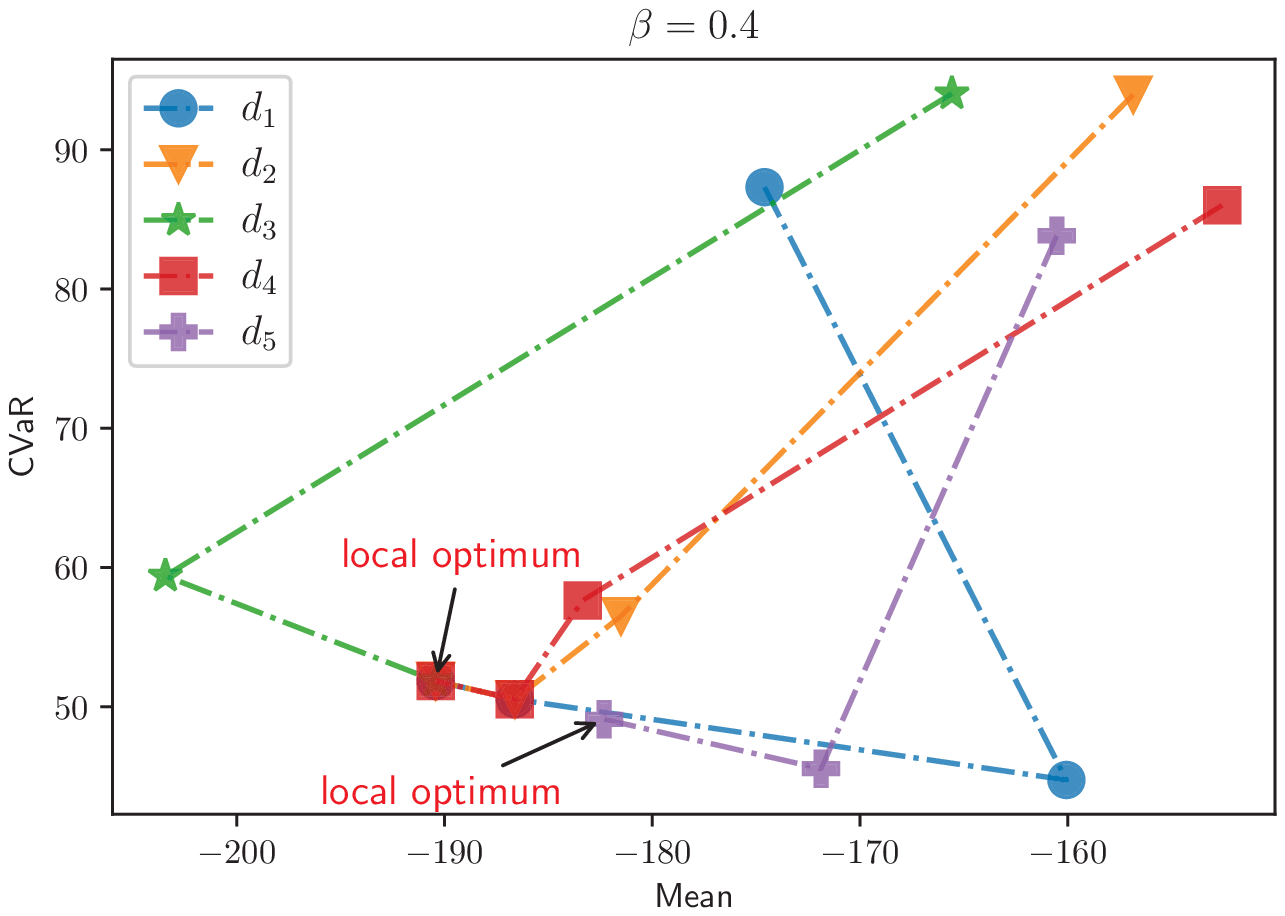}\label{fig_mean_cvar0.4}
\end{minipage}
} \hspace{-.9in} \subfigure[]{
\begin{minipage}{0.6\textwidth}
\includegraphics[width=0.8\columnwidth]{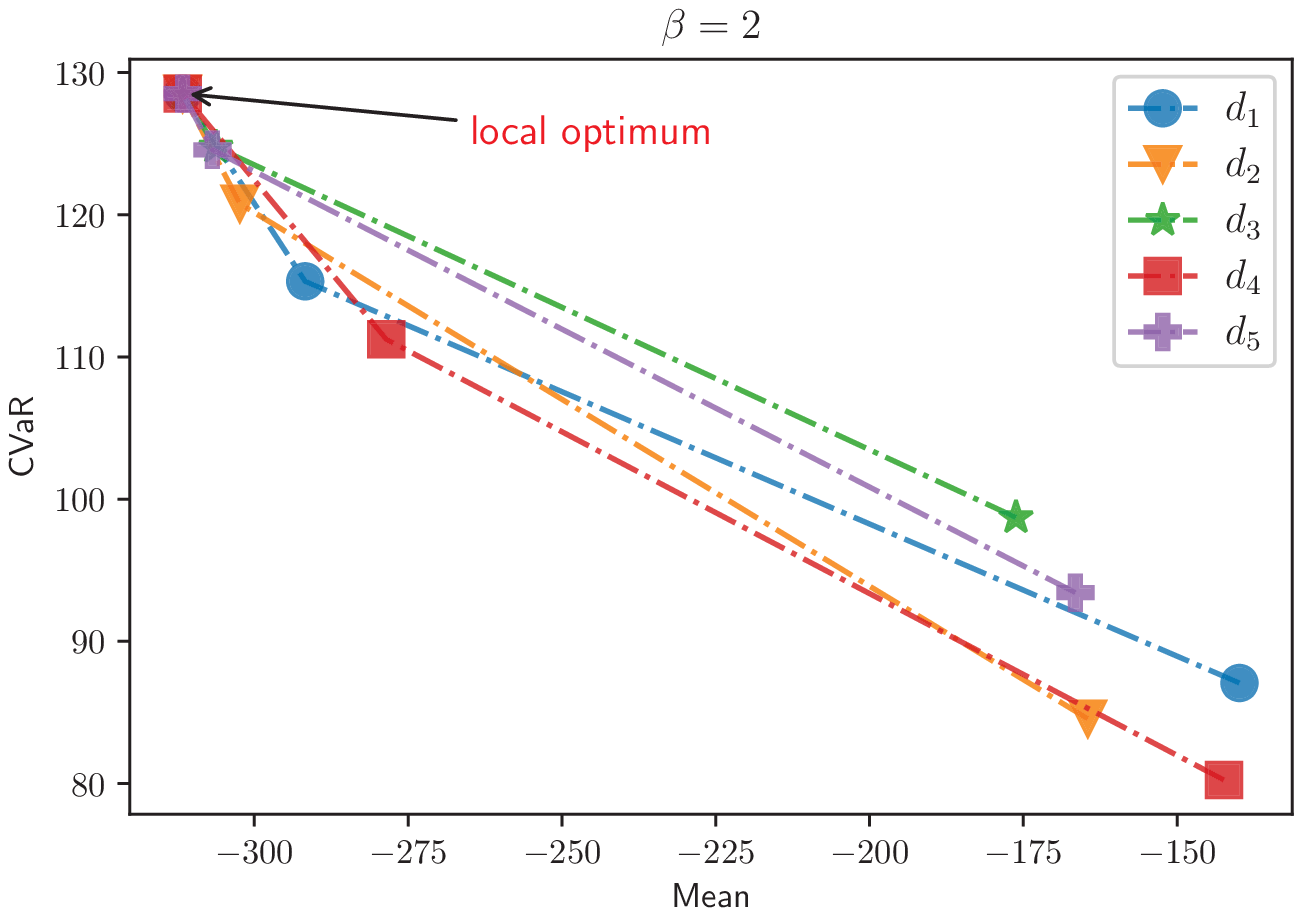}\label{fig_mean_cvar2}
\end{minipage}
}

\caption{The convergence results under different values of $\beta$
and different initial policies.} \label{fig_mean_cvar}
\end{figure*}

\begin{figure*}[htbp]

\subfigure[The optimal policy when $\beta = 0.1$]{
\begin{minipage}{0.5\textwidth}
\centering
\includegraphics[width=0.9\columnwidth]{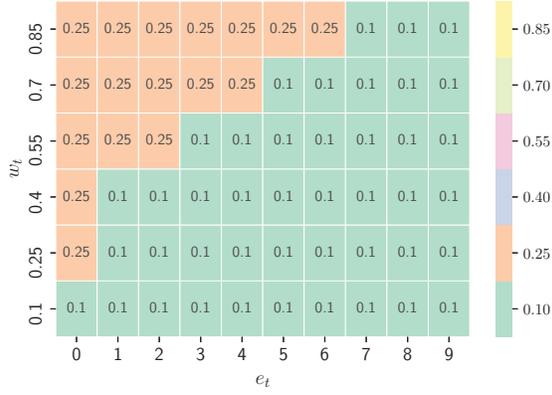}\label{fig_beta0.1_policy_matrix}
\end{minipage}
} \hspace{-.3in} \subfigure[The optimal policy when $\beta = 0.22$]{
\begin{minipage}{0.5\textwidth}
\centering
\includegraphics[width=0.9\columnwidth]{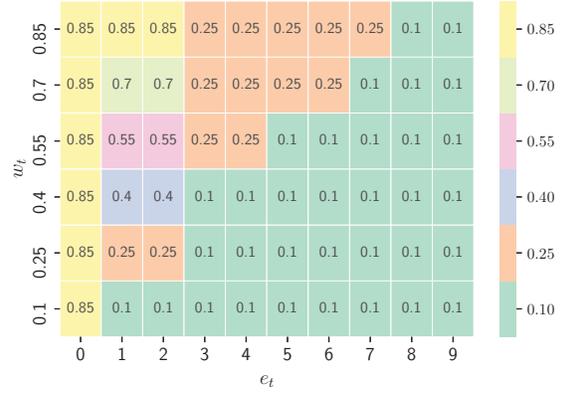}\label{fig_beta0.22_policy_matrix}
\end{minipage}
}

\subfigure[The optimal policy when $\beta = 0.4$]{
\begin{minipage}{0.5\textwidth}
\centering
\includegraphics[width=0.9\columnwidth]{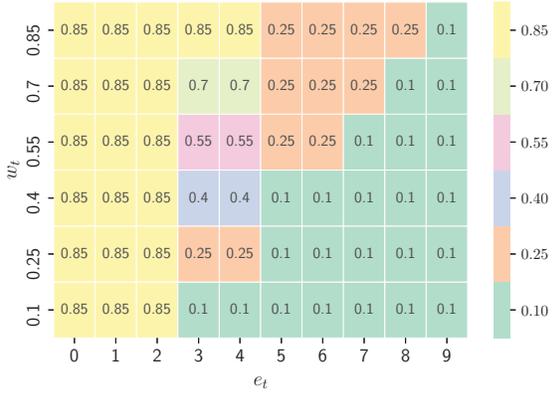}
\label{fig_beta0.4_global_policy_matrix}
\end{minipage}
} \hspace{-.3in} \subfigure[The optimal policy when $\beta = 2$]{
\begin{minipage}{0.5\textwidth}
\centering
\includegraphics[width=0.9\columnwidth]{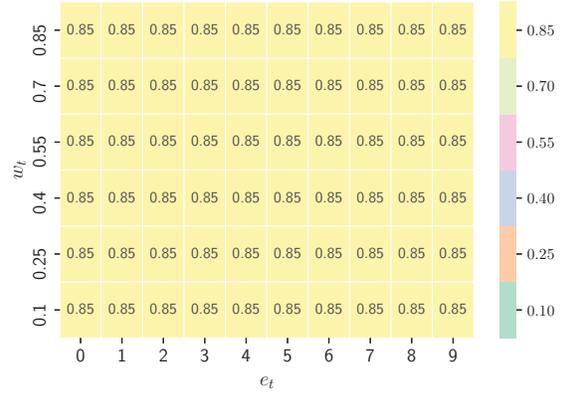}\label{fig_beta2_policy_matrix}
\end{minipage}


}

\caption{The mean-CVaR optimal policy matrices under different
values of $\beta$.} \label{fig_beta_policy_matrix}
\end{figure*}

\begin{table}[htbp]
\small \centering
\begin{tabular}{c|ccc}
\toprule Policy  & $\CVaR(c_t)$ & $\eta(c_t)$ & $\CVaR(c_t) + \beta\eta(c_t)$\\
\midrule The optimal policy when $\beta = 0.1$ & 14.24 & -37.55 & 10.48\\
\hline   The optimal policy when $\beta = 0.22$ & 24.20 & -94.64 & 3.38\\
\hline   The global optimal policy when $\beta = 0.4$  & 51.84 & -190.42 & -24.33\\
\hline   The local optimal policy when $\beta = 0.4$  & 49.09 & -182.31 & -23.84\\
\hline   The optimal policy when $\beta = 2$  & 128.52 & -311.65 & -494.77\\
\bottomrule
\end{tabular}
\caption{Performance comparisons of mean-CVaR optimal policies under
different values of $\beta$.}\label{tab num_result2}
\end{table}

\section{Discussion and Conclusion} \label{section_conclusion}
In this paper, we study the long-run CVaR optimization in the
framework of MDPs. Because of the non-additive CVaR cost function,
this dynamic optimization problem is not a standard MDP and
traditional dynamic programming is not applicable. We study this
problem by using sensitivity-based optimization. The long-run CVaR
difference formula and derivative formula are both derived.
Moreover, a so-called Bellman local optimality equation and other
properties of optimal policies are obtained. With the CVaR
sensitivity formulas, we further develop a policy iteration type
algorithm to minimize long-run CVaR, and its local convergence is
proved. Some possible extensions, including the mean-CVaR
optimization and the long-run CVaR maximization, are also discussed.

Along with this research direction, it is valuable to further study
other research topics. One topic is to extend the single objective
of CVaR to multiple objectives, such as CVaR, mean, or their ratio
metrics. Another topic is to extend our policy iteration type
algorithm to other forms, such as value iteration or policy gradient
type algorithms which are widely used in reinforcement learning. The
combination with the techniques of sampling efficiency and neural
network approximation for model-free scenarios is also a promising
direction. It is expected that the Bellman local optimality equation
and the CVaR sensitivity formulas will play important roles in the
study of these topics.


\newpage

\section*{Appendix}
\textbf{Proof of Lemma~\ref{lemma4}:}

With the CVaR difference formula \eqref{eq_diff_delta} for
deterministic policy $d$ and mixed policy $d^{\delta, d'}$, we can
see that the derivative is written as below.
\begin{eqnarray}\label{eq_56}
\frac{\partial \CVaR^{\delta}}{\partial \delta}\Big|_{\delta = 0}
&=& \lim\limits_{\delta \rightarrow 0}\frac{\CVaR^{\delta} -
\CVaR^{d}}{\delta}
\nonumber\\
&=& \lim\limits_{\delta \rightarrow 0} \sum_{i \in \mathcal S}
\pi^{\delta}(i) \Bigg[\sum_{j \in \mathcal S}
[p(j|i,d'(i)) - p(j|i,d(i))] g^{d}(\VaR^d,j) + \tilde{c}(\VaR^d,i,d'(i)) \nonumber\\
&&  - \tilde{c}(\VaR^d,i,d(i)) \Bigg] + \lim\limits_{\delta
\rightarrow 0} \frac{\Delta_{\CVaR}(\delta,d)}{\delta} \nonumber\\
&=& \sum_{i \in \mathcal S} \pi^{d}(i) \Bigg[\sum_{j \in \mathcal S}
[p(j|i,d'(i)) - p(j|i,d(i))] g^{d}(\VaR^d, j) + \tilde{c}(\VaR^d,i,d'(i)) \nonumber\\
&&  - \tilde{c}(\VaR^d,i,d(i)) \Bigg] + \lim\limits_{\delta
\rightarrow 0} \frac{\Delta_{\CVaR}(\delta,d)}{\delta},
\end{eqnarray}
where we use the fact that $\lim\limits_{\delta \rightarrow 0}
\pi^{\delta}(i) = \pi^{d}(i)$ since $d^{\delta, d'}|_{\delta = 0} =
d$. Below, we study the last term in the above equation. From the
definition \eqref{eq_Delta}, we have
\begin{equation}
\Delta_{\CVaR}(\delta,d) = \CVaR^{\delta} -
\widetilde{\CVaR}^{\delta}(\VaR^d) = \sum_{i \in \mathcal S} \sum_{a
\in \mathcal A} \pi^{\delta}(i,a) [\tilde{c}(\VaR^{\delta},i,a) -
\tilde{c}(\VaR^d,i,a)]. \nonumber
\end{equation}
Substituting \eqref{eq_CVaRf} into the above equation, we have
\begin{equation}\label{eq_58}
\Delta_{\CVaR}(\delta,d) = \VaR^{\delta} - \VaR^{d} +
\frac{1}{1-\alpha} \sum_{i \in \mathcal S} \sum_{a \in \mathcal A}
\pi^{\delta}(i,a)\Big\{ [c(i,a)-\VaR^{\delta}]^+ -
[c(i,a)-\VaR^{d}]^+\Big\}.
\end{equation}
Without loss of generality, we assume $\VaR^{\delta} > \VaR^{d}$. We
further define the following sets of state-action pairs
\begin{equation}\label{eq_59}
\begin{array}{lll}
\mathcal H^+ &:=& \{(i,a) : c(i,a) \geq \VaR^{\delta} \}, \\
\mathcal H_- &:=& \{(i,a) : c(i,a) \leq \VaR^{d} \}, \\
\mathcal H^+_- &:=& \{(i,a) : \VaR^{d} < c(i,a) < \VaR^{\delta} \}.
\end{array}
\end{equation}
Substituting \eqref{eq_59} into \eqref{eq_58}, we have
\begin{eqnarray}\label{eq_60}
\Delta_{\CVaR}(\delta,d) &=& \VaR^{\delta} - \VaR^{d} +
\frac{1}{1-\alpha} \Bigg\{ \sum_{i,a \in \mathcal H^+}
\pi^{\delta}(i,a) [c(i,a)-\VaR^{\delta} -
c(i,a) + \VaR^{d}] \nonumber\\
&& + \sum_{i,a \in \mathcal H^+_-} \pi^{\delta}(i,a) [\VaR^{\delta}
- c(i,a) - c(i,a) + \VaR^{d}] + \sum_{i,a \in \mathcal H_-}
\pi^{\delta}(i,a) [0 -
0] \Bigg \} \nonumber\\
&=& \VaR^{\delta} - \VaR^{d} - \frac{\VaR^{\delta} -
\VaR^{d}}{1-\alpha} \sum_{i,a \in \mathcal H^+} \pi^{\delta}(i,a) +
\frac{1}{1-\alpha} \sum_{i,a \in \mathcal H^+_-} \pi^{\delta}(i,a)
[\VaR^{\delta} \nonumber\\
&& - 2c(i,a) + \VaR^{d}] \nonumber\\
&=&  \frac{1}{1-\alpha} \sum_{i,a \in \mathcal H^+_-}
\pi^{\delta}(i,a) [\VaR^{\delta} - 2c(i,a) + \VaR^{d}],
\end{eqnarray}
where the last equality utilizes the fact that $\sum\limits_{i,a \in
\mathcal H^+} \pi^{\delta}(i,a) = 1 - \alpha$. From \eqref{eq_59},
we can see that
\begin{equation}\label{eq_61}
\mathcal H^+_-  \rightarrow \Phi, \qquad \mbox{when } \delta
\rightarrow 0.
\end{equation}
With the Mean Value Theorem, we also have
\begin{equation}\label{eq_62}
\VaR^{\delta} - 2c(i,a) + \VaR^{d}  \rightarrow 0, \qquad \mbox{when
} \delta \rightarrow 0, \ (i,a) \in \mathcal H^+_-.
\end{equation}
Substituting \eqref{eq_61}\&\eqref{eq_62} into \eqref{eq_60}, we
have
\begin{eqnarray}\label{eq_63}
\frac{\partial \Delta_{\CVaR}(\delta,d)}{\partial
\delta}\Big|_{\delta=0} \hspace{-0.3cm} &=& \hspace{-0.2cm}
\frac{1}{1-\alpha}\Bigg\{ \frac{\partial \VaR^{\delta}}{\partial
\delta} \hspace{-0.2cm}\sum_{i,a \in \mathcal H^+_-}
\pi^{\delta}(i,a)\Big|_{\delta=0} + \hspace{-0.2cm}\sum_{i,a \in
\mathcal H^+_-} \frac{\partial \pi^{\delta}(i,a)}{\partial \delta}
[\VaR^{\delta} - 2c(i,a) + \VaR^{d}] \Big|_{\delta
=0}\Bigg\} \nonumber\\
&=& \frac{1}{1-\alpha}\Big\{ 0 + 0 \Big\} = 0.
\end{eqnarray}
Therefore, substituting the above equation \eqref{eq_63} into
\eqref{eq_56}, we directly have
\begin{equation}
\frac{\partial \CVaR^{\delta}}{\partial \delta}\Big|_{\delta = 0} =
\sum_{i \in \mathcal S} \pi^{d}(i) \Bigg[\sum_{j \in \mathcal S}
[p(j|i,d'(i)) - p(j|i,d(i))] g^{d}(\VaR^d, j) +
\tilde{c}(\VaR^d,i,d'(i)) - \tilde{c}(\VaR^d,i,d(i)) \Bigg].
\nonumber
\end{equation}
Then Lemma~\ref{lemma4} is proved. \qed

\newpage


\begin{thebibliography}{}
\bibitem[Aguirregabiria and Mira(2010)]{Aguirregabiria10} Aguirregabiria, V. and Mira, P. (2010). Dynamic discrete choice structural models: A survey. \emph{Journal of Econometrics} 156(1), 38-67.
\bibitem[Alexander et al.(2006)]{Alexander06} Alexander, S., Coleman, T. F., and Li, Y. (2006). Minimizing CVaR and VaR for portfolio of derivatives. \emph{J. Banking Finance} 30, 583-605.
\bibitem[Artzner et al.(1999)]{Artzner99} Artzner, P., Delbaen, F., Eber, J. M., Heath, D. (1999). Coherent measures of risk. \emph{Math. Finance} 9, 203-228.
\bibitem[Barron(2008)]{Barron08}Barron, E. N. (2008). \emph{Game Theory: An Introduction. 2nd Edition}. Wiley.
\bibitem[B\"{a}uerle and Ott(2011)]{Bauerle11} B\"{a}uerle, N. and Ott J. (2011). Markov decision processes with average-value-at-risk criteria. \emph{Mathematical Methods of Operations Research} 74(3), 361-379.
\bibitem[Bertsekas(2005)]{Bertsekas05} Bertsekas, D. P. (2005). \emph{Dynamic Programming and Optimal Control--Vol. I}. Athena Scientific.
\bibitem[Boda and Filar(2006)]{Boda06} Boda, K. and Filar, J. (2006). Time consistent dynamic risk measures. \emph{Math Methods Oper Res} 63, 169-186.
\bibitem[Cao(2007)]{Cao07} Cao, X. R. (2007). \emph{Stochastic Learning and Optimization -- A Sensitivity-Based Approach}. New York: Springer.
\bibitem[Chung(1994)]{Chung94} Chung, K. J. (1994). Mean-variance tradeoffs in an undiscounted MDP: the unichain case. \emph{Operations Research} 42, 184-188.
\bibitem[Feinberg and Shwartz(2002)]{Feinberg02} Feinberg, E. A. and Shwartz, A. (eds.) (2002). \emph{Handbook of Markov Decision Processes: Methods and Applications}. Kluwer, Boston.
\bibitem[Fu et al.(2009)]{Fu09} Fu, M. C., Hong, L. J., and Hu, J. Q. (2009). Conditional Monte Carlo estimation of quantile sensitivities. \emph{Management Science} 55 (12), 2019-2027.
\bibitem[Haskell and Jain(2015)]{Haskell15} Haskell, W. B. and Jain, R. (2015). A convex analytic approach to risk-aware Markov decision processes. \emph{SIAM Journal on Control and Optimization} 53(3), 1569-1598.
\bibitem[Hong et al.(2014)]{Hong14} Hong, L. J., Hu, Z., and Liu, G. (2014). Monte Carlo methods for value-at-risk and conditional value-at-risk: A review. \emph{ACM Transactions on Modeling and Computer Simulation} 24, 1-37.
\bibitem[Huang and Guo(2016)]{Huang16} Huang, Y. and Guo, X. (2016). Minimum average value-at-risk for finite horizon semi-Markov decision processes in continuous time. \emph{SIAM Journal on Optimization} 26(1), 1-28.
\bibitem[Kaelbling et al.(1996)]{Kaelbling96} Kaelbling, L. P., Littman, M. L., and Moore, A. W. (1996). Reinforcement learning: A survey. \emph{Journal of Artificial Intelligence} 4, 237-285.
\bibitem[Lewis et al.(2012)]{Lewis12} Lewis, F.L., Vrabie, D.L. and Syrmos, V.L. (2012). \emph{Optimal Control, 3rd Edition}. John Wiley \& Sons, Hoboken.
\bibitem[Littman(1994)]{Littman94}Littman, M. L. (1994). Markov games as a framework for multi-agent reinforcement learning. \emph{Machine Learning Proceedings 1994}, 157-163. Morgan Kaufmann.
\bibitem[Markowitz(1952)]{Markowitz52} Markowitz, H. (1952). Portfolio selection. \emph{The Journal of Finance} 7, 77-91.
\bibitem[Miller and Yang(2017)]{Miller17} Miller, C. W. and Yang, I. (2017). Optimal control of conditional value-at-risk in continuous time. \emph{SIAM Journal on Control and Optimization} 55(2), 856-884.
\bibitem[Pflug and Pichler(2016)]{Pflug16} Pflug, G. C. and Pichler, A. (2016). Time-consistent decisions and temporal decomposition of coherent risk functionals. \emph{Mathematics of Operations Research} 41(2), 682-699.
\bibitem[Puterman(1994)]{Puterman94} Puterman, M. L. (1994). \emph{Markov Decision Processes: Discrete Stochastic Dynamic Programming}. New York: John Wiley \& Sons.
\bibitem[Rockafellar and Uryasev(2000)]{Rockafellar00} Rockafellar, R. T. and Uryasev, S. (2000). Optimization of conditional value-at-risk. \emph{Journal of Risk} 2(3), 21-42.
\bibitem[Rockafellar and Uryasev(2002)]{Rockafellar02} Rockafellar, R. T. and Uryasev, S. (2002). Conditional Value-at-Risk for general loss distributions. \emph{Journal of Banking Finance} 26, 1443-1471.
\bibitem[Rust(1987)]{Rust87} Rust, J. (1987). Optimal replacement of GMC bus engines: An empirical model of Harold Zurcher. \emph{Econometrica} 55, 999-1033.
\bibitem[Sobel(1994)]{Sobel94} Sobel, M. J. (1994). Mean-variance tradeoffs in an undiscounted MDP. \emph{Operations Research} 42, 175-183.
\bibitem[Sutton and Barto(2018)]{Sutton18} Sutton, R. S. and Barto, A. G. (2018). \emph{Reinforcement Learning: An Introduction, 2nd Edition}. MIT Press, Cambridge, MA.
\bibitem[Xia(2020)]{Xia20} Xia, L. (2020). Risk-sensitive Markov decision processes with combined metrics of mean and variance. \emph{Production and Operations Management} 29(12), 2808-2827.
\bibitem[Xia et al.(2014)]{Xia14} Xia, L., Jia, Q. S., and Cao, X. R. (2014). A tutorial on event-based optimization -- A new optimization framework. \emph{Discrete Event Dynamic Systems: Theory and Applications} 24(2), 103-132.
\bibitem[U\v{g}urlu(2017)]{Ugurlu17}U\v{g}urlu, K. (2017). Controlled Markov decision processes with AVaR criteria for unbounded costs. \emph{Journal of Computational and Applied Mathematics} 319, 24-37.
\bibitem[von Neumann(1928)]{VonNeumann28}Von Neumann, J. (1928). Zur Theorie der Gesellschaftsspiele. \emph{Mathematische Annalen} 100, 295-320. doi:10.1007/BF01448847.
\bibitem[Zhou et al.(2022)]{Zhou22}Zhou, Z., Athey, S., and Wager, S. (2022). Offline multi-action policy learning: Generalization and
optimization. \emph{Operations Research}, arXiv:1810.04778.


\end{thebibliography}
\end{document}